\documentclass[11pt,a4paper]{article}

\usepackage{amsmath, amsfonts, amssymb}
\usepackage{graphicx}
\usepackage[latin1]{inputenc}

\def\be#1\ee{\begin{equation}#1\end{equation}}
\newtheorem{thm}{Theorem}[section]

\newtheorem{prop}[thm]{Proposition}

\newtheorem{cor}[thm]{Corollary}
\newtheorem{rem}[thm]{Remark}

\DeclareMathOperator{\sgn}{sgn}
\DeclareMathOperator{\Var}{Var}
\DeclareMathOperator{\mes}{Leb}


\def\R{\mathbb{R}}
\def\E{\mathbb{E}\,}







\newenvironment{proof}[1][] {\noindent {\bf Proof#1:} }{\hspace*{\fill}$\square$\medskip\par}


\def\toalsur{\stackrel{\textrm{a.s.}}{\longrightarrow}}

\def\CC{{\mathcal C}}
\def\EE{{\mathcal E}}
\newcommand{\eps}{\varepsilon}

\def\W21{{\mathbb W}_{2}^{1}}
\def\WW{{\mathbb W}}

\def \=L{\ {\buildrel\hbox{\scriptsize d }\over =}\ }
\def \toalsur{\ {\buildrel\hbox{\scriptsize a.s. }\over \to}\ }

\def \toL{\ {\buildrel L_1 \over \to}\ }

\def\wf{{\widehat{f}}}
\def\wK{{\widehat{K}}}

\begin{document}

\title{\bf Least Energy Approximation for Processes
\\ with Stationary Increments}
\author{
   Zakhar Kabluchko
   \footnote{M\"unster University, Orl\'eans-Ring 10, 48149 M\"unster, Germany,
   email \ {\tt  zakhar.kabluchko@uni-muenster.de}}
     \and
   Mikhail Lifshits
   \footnote{St.Petersburg State University, Russia, Stary
   Peterhof, Bibliotechnaya pl.,2,
   email {\tt mikhail@lifshits.org} and MAI, Link\"oping University.}
}
\date{\today}

\maketitle

\begin{abstract}
A function $f=f_T$ is called least energy approximation to a function $B$
on the interval $[0,T]$ with penalty $Q$ if it solves the variational problem
\[
  \int_0^T \left[ f'(t)^2 + Q(f(t)-B(t)) \right] dt
  \searrow \min.
\]
For quadratic penalty the least energy approximation can be found explicitly.
If $B$ is a random process with stationary increments, then on large intervals
$f_T$ also is close to a process of the same class and the relation between
the corresponding spectral measures can be found. We show that in a long run
(when $T\to \infty$) the expectation of energy of optimal approximation per unit
of time converges to some limit which we compute explicitly. For Gaussian and L\'evy processes we complete
this result with almost sure and $L^1$ convergence.

As an example, the asymptotic expression of approximation energy is found for
fractional Brownian motion.


\end{abstract}
\vskip 1cm

\noindent
\textbf{2010 AMS Mathematics Subject Classification:}
Primary: 60G10;  Secondary: 60G15, 49J40, 41A00.
\bigskip

\noindent
\textbf{Key words and phrases:}
least energy approximation, Gaussian process, L\'evy process, fractional Brownian motion,
process with stationary increments, taut string, variational calculus.
\vfill

\newpage

\section{Introduction}

Least energy approximations play important role both in pure and applied mathematics.
The most important approximation of this kind is known under the name of
{\it taut string}.

Given a target function $B(\cdot)$ and a nonnegative width function
$r(\cdot)$ defined on a time interval $[0,T]$ the {\it taut string} is
a function $f_T$ providing minimum for the {\it energy functional}
\[
  L_T(h):=\int_0^T h'(t)^2 \, dt
\]
among all absolutely continuous functions $h$ with given starting
and final values and satisfying
\[
   B(t)-r(t) \le h(t)\le B(t)+r(t),\qquad 0\le t \le T.
\]
The same function optimizes (under the same restrictions) the graph
length $\int_0^T \sqrt{1+h'(t)^2} dt$, variation $\int_0^T |h'(t)| dt$, and
other  functionals represented as integrals of a convex function of $h'$.

Taut string is a classical object well known in Variational Calculus, in
Mathematical Statistics, see \cite{Davies}, \cite{Mammen}, and in a broad range
of applications such as image processing, see \cite[Chapter 4, Subsection 4.4]{Scherzer}
or communication theory, see \cite{Setterqvist}.

For the case when a {\it random function} $B(\cdot)$ is approximated, very
few information is available. Lifshits and Setterqvist studied in
\cite{LS} the energy of taut string accompanying Wiener process.

Unfortunately the taut string is rather hard to describe and to compute
explicitly. Therefore, the study of other least energy approximations makes
sense. One possible alternative is to replace the rigid boundary
restrictions by introducing some penalty function that controls the deviation
from the target function.

A function $f=f_T$ is called least energy approximation to a function $B$
on the interval $[0,T]$ {\it with penalty $Q$}, if it solves the variational problem
\[
  \int_0^T \left[ f'(t)^2 + Q(f(t)-B(t)) \right] dt
  \searrow \min .
\]
Notice that this approach is very much in the spirit of interpolation theory
from functional analysis.
The classical taut string can be formally included in this setting by using
time-inhomogeneous penalty
\[
  Q(x,t):=
  \begin{cases}0,& |x|\le r(t),\\
  +\infty, & |x|> r(t).
  \end{cases}
\]

One of the most natural choices for penalty is the quadratic penalty
$Q(y)=y^2$ where one can advance sufficiently far with explicit calculations.

\begin{center}
\begin{figure} [ht]
\begin{center}
\includegraphics[height=0.5\textwidth,width= 0.99\textwidth]{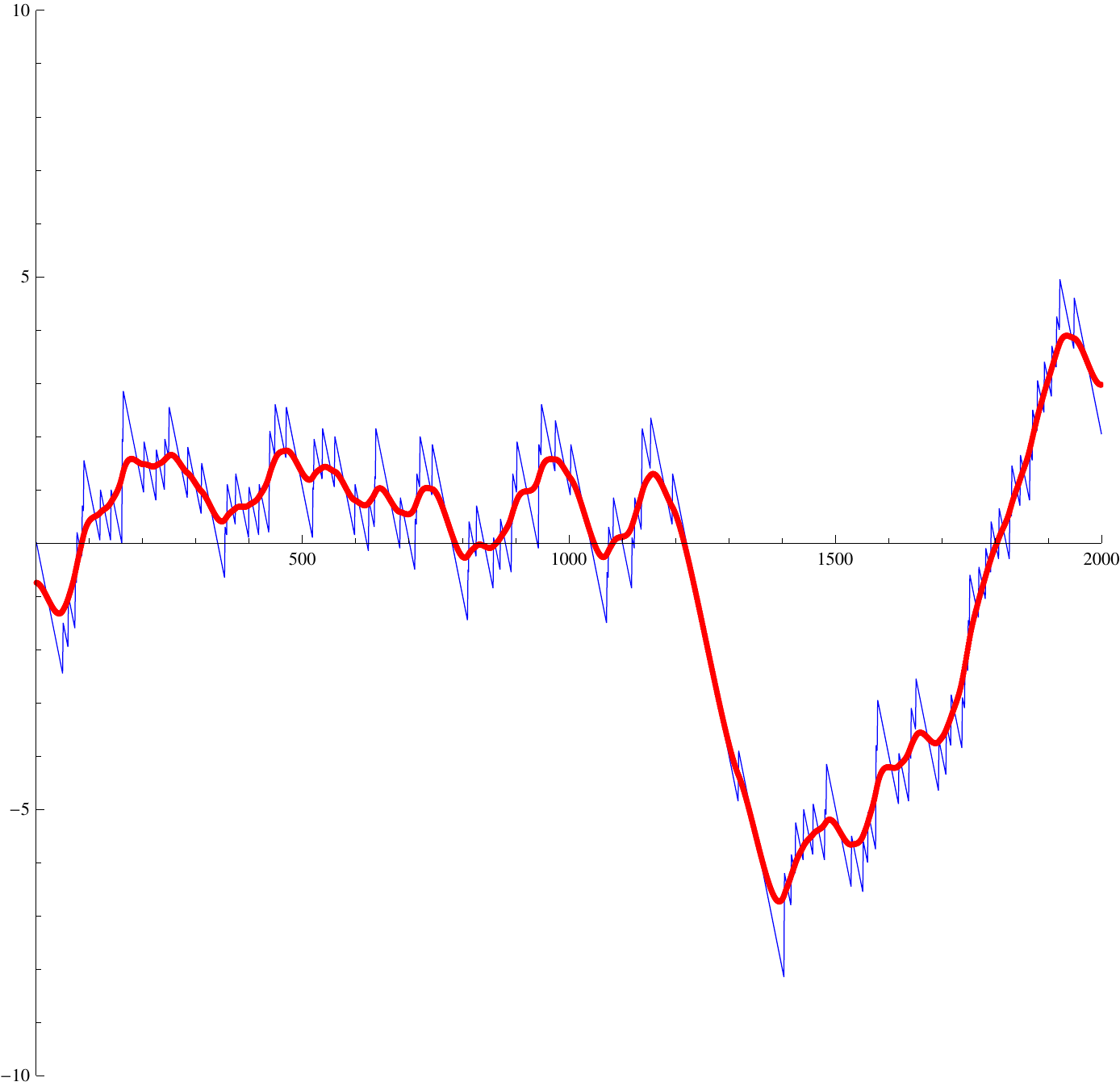}
\end{center}
\caption{The least energy approximation of centered Poisson process.}
\label{fig:lea_Poisson}
\end{figure}
\end{center}


For quadratic penalty the least energy approximation can be found explicitly.
We study its behavior in a long run (when $T\to \infty$) and show that
under weak assumption on $B$ it converges to some limit with exponential rate.
\medskip

In Section \ref{s:determinist} we provide necessary exact and asymptotic
formulas for the least energy approximation of a deterministic function.
In Section \ref{s:statincr} the central results of the article related to
the approximation of a random process with stationary increments
are obtained. If $B$ is a random process with stationary increments, then
on large intervals its least energy approximation
(cf. Figure \ref{fig:lea_Poisson})
also is close to a process of the same class and the relation between the
corresponding spectral measures
can be found. We show that in a long run (when $T\to \infty$) the expectation
of energy of optimal approximation per unit of time converges to some limit.
For Gaussian and L\'evy processes we complete this result with almost sure and
$L^1$ convergence. As an example, the asymptotic expression of approximation
energy is found for fractional Brownian motion. In view of the importance of
Wiener process, we propose an alternative approach to its least energy
approximation in Section \ref{s:Wharm}.
\medskip

Finally, we wish to notice that our results can be considered as a complement
to those of traditional stochastic control theory, where the best approximating
function is chosen among the adapted ones, i.e.\ its value at time $t$ must be
determined by the values of the target function $B(s)$, $s\le t$. Adaptive
approach is more realistic but it leads to the problems solvable mainly for
Markov target processes (see e.g.\ \cite{Kar}, \cite{LS} for the least energy
approximation of Wiener process). One may consider our results as the lower bounds
for the least energy achievable by adaptive control in case it is unknown,
or as the evaluation of price to be paid for not knowing the future, in case
when both optimal adaptive and non-adaptive least energy approximations are known.

\section{Least energy approximation: deterministic setting}
\label{s:determinist}

\subsection{Approximation on a fixed interval}

The following variational problem (which is our starting point)
and its solution are quite standard  facts from variational calculus.
We provide the proof for the sake of completeness but postpone it to
the end of the article.

Let $B(\cdot)$ be a fixed deterministic function on
an interval $[0,T]$. In this section we deal with minimization
problem  of the functional
\[
   \EE_T(f):= \int_0^T \left[ f'(t)^2 + Q(f(t)- B(t)) \right] dt
\]
over the Sobolev space $\W21[0,T]$ of all absolutely continuous functions $f:[0,T]\to \R$
having square integrable derivative.
Here $Q(\cdot)$ is an appropriate penalty function.

\begin{prop}
\label{p:det}
Let $B(\cdot)$ be a measurable function on $[0,T]$ and let $Q(\cdot)$
be a strictly convex non-negative differentiable function on $\R$ such that
\[
  \lim_{x\to\pm\infty} Q(x) =+\infty.
\]
Assume that either $B$ is bounded  or for some $A>0$, $p\ge 0$ it is true that
$B\in L_{p+1}[0,T]$ and
\be \label{Qprime}
   |Q'(x)| \le A (|x|^p +1), \qquad x \in \R.
\ee
Then there exists a unique solution $f_T$ of the problem
\be \label{LQmin}
  \EE_T(f)\searrow \min,  \quad f\in\W21[0,T] \, .
\ee
This solution has absolutely continuous first derivative $f_T'(\cdot)$
and satisfies the equation
\[
   2 \, f_T''(t) = Q'(f_T(t)- B(t)), \qquad  \textrm{for a.e. } t\in (0,T),
 \]
 and the boundary conditions $(f_T)_+'(0)=(f_T)_-'(T)=0$.
\end{prop}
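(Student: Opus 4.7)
The plan is to apply the direct method of the calculus of variations for existence, derive the Euler-Lagrange equation via first variation, and exploit strict convexity for uniqueness.

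For existence, I would take a minimizing sequence $(f_n)\subset\W21[0,T]$ with $\EE_T(f_n)\searrow\inf$. The bound on $\EE_T(f_n)$ gives $\|f_n'\|_{L^2[0,T]}\le C$ uniformly. The coercivity $Q(x)\to\infty$ together with $\int_0^T Q(f_n-B)\,dt\le C$ then yields, for a sufficiently large $M$, a measurable set $E_n\subset[0,T]$ of measure at least $T/2$ on which $|f_n-B|\le M$ (otherwise $\int Q(f_n-B) \ge (T/2)\min_{|y|\ge M}Q(y)\to\infty$); local integrability of $B$ gives $\int_{E_n}|f_n|\le C'$ uniformly, so some $t_n\in E_n$ satisfies $|f_n(t_n)|\le 2C'/T$. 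Combined with the H\"older estimate $|f_n(t)-f_n(t_n)|\le\|f_n'\|_{L^2}T^{1/2}$, this shows that $(f_n)$ is uniformly bounded and therefore bounded in $\W21[0,T]$. By Rellich-Kondrachov I extract a subsequence converging weakly in $\W21$ and uniformly on $[0,T]$ to some $f_T$. Weak lower semicontinuity of $f\mapsto\int_0^T (f')^2\,dt$ together with Fatou's lemma applied to $Q(f_n-B)$ (which converges a.e.\ by continuity of $Q$) gives $\EE_T(f_T)\le\liminf_n\EE_T(f_n)$, so $f_T$ is a minimizer.

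For the Euler-Lagrange equation and the boundary conditions, I would take an arbitrary $\varphi\in\W21[0,T]$ with no endpoint constraints. The growth assumption (\ref{Qprime}) combined with $B\in L_{p+1}[0,T]$ (or boundedness of $B$) provides an $L^1$-dominating function justifying differentiation of $\varepsilon\mapsto\EE_T(f_T+\varepsilon\varphi)$ at $\varepsilon=0$ under the integral sign, and minimality yields
\[
  \int_0^T\bigl[2f_T'(t)\varphi'(t)+Q'(f_T(t)-B(t))\varphi(t)\bigr]dt=0.
\]
Testing first against $\varphi\in C_c^\infty((0,T))$ shows that $2f_T'$ has weak derivative equal to $Q'(f_T-B)\in L^1[0,T]$, so $f_T'$ is absolutely continuous and $2f_T''(t)=Q'(f_T(t)-B(t))$ holds almost everywhere. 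An integration by parts then reduces the identity to $2f_T'(T)\varphi(T)-2f_T'(0)\varphi(0)=0$ for every $\varphi\in\W21[0,T]$, and choosing $\varphi$ supported near only one endpoint at a time forces $(f_T)_+'(0)=(f_T)_-'(T)=0$.

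For uniqueness, both $f\mapsto\int_0^T f'(t)^2\,dt$ and $f\mapsto\int_0^T Q(f(t)-B(t))\,dt$ are convex in $f$, and strict convexity of $Q$ makes the second term strictly convex; hence $\EE_T$ is strictly convex and admits at most one minimizer. The main technical obstacle is the a priori bound on the minimizing sequence in the unbounded case: the coercivity of $Q$ is only qualitative, so extracting from $\int_0^T Q(f_n-B)\,dt\le C$ a uniform pointwise control on $f_n$ requires the tightness-plus-H\"older argument above. The other delicate point is justifying differentiation under the integral sign in the first variation, which is precisely where the growth hypothesis (\ref{Qprime}) enters essentially.
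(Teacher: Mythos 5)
Your proposal is correct and follows essentially the same route as the paper: direct-method existence with an a priori uniform bound on minimizers extracted from the $L_2$-bound on $f'$ plus coercivity of $Q$, uniqueness from strict convexity, and the Euler--Lagrange equation with natural boundary conditions obtained from the vanishing first variation, with the growth hypothesis \eqref{Qprime} (or boundedness of $B$) used exactly where the paper uses it, to justify differentiating under the integral sign. The only difference is cosmetic: the paper derives the one-sided derivatives of $f_T$ directly via explicit piecewise-linear test functions, whereas you pass through the distributional identity $2f_T''=Q'(f_T-B)\in L^1$ and integrate by parts to read off $(f_T)_+'(0)=(f_T)_-'(T)=0$; both are standard and valid.
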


\begin{rem} {\rm In the following we will apply this proposition to random functions
$B$ that are not necessarily bounded but belong to $L_2[0,T]$ almost surely.
Therefore, assumption \eqref{Qprime} is adequate.}
\end{rem}

{\it For the rest of the paper,  we restrict attention to the quadratic penalty $Q(y)=y^2$
because in this case we are able to obtain explicit and quite meaningful results}.

\begin{prop} Let $B\in L_2[0,T]$.
The solution of the minimization problem with quadratic penalty
\[
   \EE_T(f):= \int_0^T \left[ f'(t)^2 + |f(t)- B(t)|^2 \right] dt
   \searrow \min
\]
is given for $0\le t\le T$ by
\be \label{f}
 f_T(t)
 =  - \int_0^t B(s) \sinh(t-s) \, ds
   + \frac{\int_0^T B(s) \cosh(T-s)\, ds} {e^T-e^{-T}}
     \left(e^t+e^{-t}\right).
 \ee
Moreover, for $0< t<T$ we have
 \be \label{fprime}
 f_T'(t)
 = - \int_0^t B(s) \cosh(t-s)\, ds
   + \frac{ \int_0^T B(s) \cosh(T-s)\, ds} {e^T-e^{-T}}
     \left(e^t-e^{-t}\right).
 \ee
\end{prop}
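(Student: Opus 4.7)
The plan is to invoke Proposition \ref{p:det} to reduce the variational problem to a linear boundary value problem, then solve the ODE explicitly. Since $Q(y)=y^2$ is strictly convex, differentiable, nonnegative, and its derivative $Q'(y)=2y$ satisfies \eqref{Qprime} with $p=1$, and since $B\in L_2[0,T]$, Proposition \ref{p:det} applies. It yields existence and uniqueness of the minimizer $f_T \in \W21[0,T]$, together with the Euler--Lagrange equation
\[
 f_T''(t) = f_T(t) - B(t) \qquad \text{for a.e. } t\in(0,T),
\]
and the Neumann-type boundary conditions $f_T'(0)=f_T'(T)=0$.

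It remains to solve this two-point boundary value problem explicitly. The homogeneous equation $f''-f=0$ has fundamental solutions $e^t$ and $e^{-t}$ with Wronskian $-2$; variation of parameters then singles out the particular solution
\[
 f_p(t) = -\int_0^t B(s)\sinh(t-s)\, ds,
\]
which is convenient because $f_p(0) = f_p'(0) = 0$. Writing the general solution as $f(t) = c_1 e^t + c_2 e^{-t} + f_p(t)$ and differentiating (the Leibniz boundary term $B(t)\sinh(0)$ vanishes), the condition at $t=0$ gives $c_1 = c_2 =: C$, while the condition at $t=T$ reduces to
\[
 C(e^T - e^{-T}) = \int_0^T B(s)\cosh(T-s)\, ds,
\]
which fixes $C$ uniquely and produces formula \eqref{f}. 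Differentiating \eqref{f} once more yields \eqref{fprime}.

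The argument is essentially algebraic; no analytic obstacle appears, since the ODE has constant coefficients and the boundary conditions decouple the two constants. The only mild subtlety is the choice of particular solution: picking one that vanishes (together with its derivative) at $t=0$ makes the boundary condition at $0$ produce $c_1=c_2$ immediately, leaving a single linear equation for the remaining constant. One can check \emph{a posteriori} that the resulting $f_T$ is absolutely continuous with absolutely continuous derivative on $[0,T]$, so $f_T \in \W21[0,T]$, and combining this with the uniqueness part of Proposition \ref{p:det} completes the proof.
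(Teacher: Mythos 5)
Your proposal is correct and follows essentially the same route as the paper: invoke Proposition \ref{p:det} to reduce to the linear two-point boundary value problem $f''=f-B$, $f'(0)=f'(T)=0$, take the particular solution $-\int_0^t B(s)\sinh(t-s)\,ds$ (the paper verifies it by direct differentiation rather than citing variation of parameters, but that is the same computation), and determine the constants $C_1=C_2$ from the boundary conditions. No discrepancies.
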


\begin{proof}
From Proposition \ref{p:det} we know that
the minimizer $f_T$ exists uniquely and solves the differential equation
 \be \label{de}
    f''(t) = f(t)- B(t)
 \ee
 with boundary conditions
 \[
    f_{+}'(0)=f_{-}'(T)=0.
 \]
 The general form of the solution for linear equation \eqref{de} without
 boundary conditions is
 \[
   f(t) = f_*(t)+ C_1 e^t +C_2 e^{-t}
 \]
 where $f_*$ is any fixed solution of \eqref{de}.

Let us check that
\[
   f_*(t):= - \int_0^t B(s) \sinh(t-s)ds
   = \frac 12 \left[
   e^{-t} \int_0^t B(s) e^s ds
   -e^{t} \int_0^t B(s) e^{-s} ds
   \right]
\]
indeed provides a solution of \eqref{de}. We have
\begin{eqnarray} \nonumber
   f'_*(t) &=& \frac 12 \left[
   - e^{-t} \int_0^t B(s) e^s ds + e^{-t} B(t) e^{t}
   -e^{t} \int_0^t B(s) e^{-s} ds -e^t B(t) e^{-t}
   \right]
\\ \nonumber
&=&    - \frac 12 \left[ e^{-t} \int_0^t B(s) e^s ds
   +e^{t} \int_0^t B(s) e^{-s} ds \right]
\\ \label{cosh1}
&=&    -
   \int_0^t B(s) \cosh(t-s) ds.
\end{eqnarray}
It follows that
\begin{eqnarray*}
 f''_*(t) &=& - \frac 12 \left[
   - e^{-t} \int_0^t B(s) e^s ds + e^{-t} B(t) e^{t}
   +e^{t} \int_0^t B(s) e^{-s} ds +e^t B(t) e^{-t}
   \right]
\\
&=&    \frac 12 \left[
   e^{-t} \int_0^t B(s) e^s ds
   -e^{t} \int_0^t B(s) e^{-s} ds - 2B(t)
   \right]
\\
&=&   f_*(t)-B(t),
\end{eqnarray*}
and we see that \eqref{de} holds.

Next, we adjust the coefficients $C_1$ and $C_2$ by using boundary
conditions. From
\[
  0= f_+'(0)= (f_{*})_+' (0)+C_1-C_2= C_1-C_2
\]
we derive $C_1=C_2$. From
\[
  0=  f_-'(T) =   (f_{*})_-'(T)+C_1 \, e^T-C_2 \, e^{-T}
   =  (f_{*})_-'(T)+C_1\, (e^T-e^{-T})
\]
by using \eqref{cosh1} we obtain
\[
  C_1= \frac{-   (f_{*})_-'(T)}{e^T-e^{-T}}
     = \frac{ \int_0^T B(s) \cosh(T-s) \, ds}
      {e^T-e^{-T}}
\]
and arrive at formulas \eqref{f} and \eqref{fprime}.
\end{proof}

 \subsection{Approximation in a long run}

We are going now to study the behavior of the least energy approximation
in a long run, i.e.\ when the subject of approximation, function $B(\cdot)$,
is fixed while the interval length $T$ goes to infinity.

In view of future applications it will be more convenient for us
to let $B(\cdot)$ be defined on the entire real line. Although approximation
problem involves only the restriction of $B$ on the positive half-line, if
necessary, one can always extend $B$ to the negative half-line by assigning
it zero values.

Recall that the function $f_T(\cdot)$ defined by
formula \eqref{f} provides the least energy approximation
to $B(\cdot)$ on the interval $[0,T]$.

We first derive a simple approximative heuristics for $f_T$, then transform
this heuristics into a rigorous result.

 \subsubsection{Heuristics}

 We will simplify the expressions for \eqref{f} and \eqref{fprime}
 as follows. Assume that $t,T,T-t,T-s\to +\infty$ and drop all
 small exponential terms $e^{-t}$, $e^{-T}$, $e^{-(T-t)}$, $e^{-(T-s)}$
 where appropriate.

In particular we let
\begin{eqnarray*}
  \frac{\cosh(T-s)} {e^T-e^{-T}} \left(e^t+e^{-t}\right)
  &\approx& \frac 12\left(e^{T-s} +e^{s-T}\right) e^{-T} e^t
  = \frac 12\left(e^{t-s} +e^{s+t-2T}\right)
\\
  &=& \frac 12\left(e^{t-s} +e^{s-t-2(T-t)}\right)
  \approx \frac 12 \ e^{t-s}.
\end{eqnarray*}
By plugging this into \eqref{f} and \eqref{fprime}, we get
 \begin{eqnarray*}
 f_T(t)
&\approx&  \int_0^t B(s) \left(\sinh(s-t) - \frac 12 \ e^{t-s}\right) ds
   +  \int_t^T B(s) \frac 12 \ e^{t-s} ds
 \\
 &=& \frac 12  \int_0^t B(s) \ e^{s-t} ds
     +   \frac 12  \int_t^T B(s) \ e^{t-s} ds
 \\ 
 &=& \frac 12  \int_0^T B(s) \ e^{-|s-t|} ds.
 \end{eqnarray*}
 Similarly,
 \[
  f_T'(t) \approx \frac 12  \int_0^T B(s) \sgn(s-t) \ e^{-|s-t|} ds.
 \]
 Assuming that the function $B(\cdot)$ is defined on
 entire $\R$, it is more natural to use approximations based on "stationary" kernels,
i.e.\  $f_T(t) \approx \wf(t)$ and $f_T'(t) \approx \wf\,'(t)$, where
 \be  \label{fb}
   \wf(t)
   := \frac 12   \int_{-\infty}^\infty B(s) \ e^{-|s-t|} ds
 \ee
 and
 \be  \label{fprime_b}
   \wf\,'(t)
   = \frac 12  \int_{-\infty}^\infty B(s) \sgn(s-t) \ e^{-|s-t|} ds.
 \ee

 Notice that the {\it approximations $\wf$ and $\wf\,'$ do not depend on $T$}. This
 shows a local nature of the least energy approximation in a long run.

 \subsubsection{Rigorous result}

The following result shows that the least energy approximation is
exponentially close to its stationary approximation at the bulk values of
time.

 \begin{prop} \label{p:exp}
 Assume that
 \be \label{Bbound}
   |B(s)| \le C (|s|+1)^p,   \qquad s\in \R,
 \ee
 for some $C,p>0$. Let $f_T$ be the least energy approximation  given by
\eqref{f} and let approximation $\wf$ be given by \eqref{fb}.

Then for all $T\ge 1$ and all
 $t\in[0,T]$ we have
 \be \label{wf_fT}
   |f_T(t)-\wf(t)|\le C\, A_p\, (T+1)^p
   \left( e^{-t}+ e^{-T}+ e^{-(T-t)}\right),
 \ee
 and
 \be \label{wf_fTprime}
   |f'_T(t)-\wf\,'(t)|\le C\, A_p\, (T+1)^p
   \left( e^{-t}+ e^{-T}+ e^{-(T-t)}\right),
 \ee
 where a constant $A_p$ depends only on $p$.
 \end{prop}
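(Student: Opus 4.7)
The plan is to directly subtract formula \eqref{fb} from formula \eqref{f}, match main terms exponential-by-exponential, and then estimate the residual exponential tails using the polynomial bound \eqref{Bbound}.

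First I would expand all hyperbolic functions into exponentials. Writing $-\sinh(t-s)=\tfrac12(e^{s-t}-e^{t-s})$ and
\[
\frac{\cosh(T-s)(e^t+e^{-t})}{e^T-e^{-T}}=\frac{1}{2(1-e^{-2T})}\bigl[e^{t-s}+e^{-t-s}+e^{s+t-2T}+e^{s-t-2T}\bigr],
\]
formula \eqref{f} becomes a sum of six integrals over $[0,T]$. Next I would split $\int_0^T B(s)e^{t-s}\,ds=\int_0^t+\int_t^T$ and combine with the $\tfrac12\int_0^t B(s)e^{t-s}\,ds$ term coming from $-\sinh$: the exact cancellation yields the two \emph{main} terms
\[
\frac12\int_0^t B(s)e^{s-t}\,ds+\frac12\int_t^T B(s)e^{t-s}\,ds,
\]
which are precisely the restriction of $\wf(t)$ to $s\in[0,T]$. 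Consequently, writing $\frac{1}{1-e^{-2T}}=1+\frac{e^{-2T}}{1-e^{-2T}}$, the difference $f_T(t)-\wf(t)$ splits into two groups of error terms: the ``missing tails'' $\tfrac12\int_{-\infty}^0 B(s)e^{s-t}\,ds$ and $\tfrac12\int_T^{\infty}B(s)e^{t-s}\,ds$ from $\wf$, and the four auxiliary exponentials $e^{-t-s}$, $e^{s+t-2T}$, $e^{s-t-2T}$ and the $e^{-2T}/(1-e^{-2T})$ correction coming from $f_T$.

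Then I would estimate each of these six remainders using the hypothesis $|B(s)|\le C(|s|+1)^p$. The two key integral inequalities are
\[
\int_0^{\infty}(1+s)^p e^{-s}\,ds\le A_p,\qquad \int_0^T (1+s)^p e^{s}\,ds\le A_p\,(1+T)^p\,e^T,\qquad \int_T^{\infty}(1+s)^p e^{-s}\,ds\le A_p\,(1+T)^p\,e^{-T},
\]
the first of which handles the terms carrying $e^{-t}$ (missing left tail and the $e^{-t-s}$ term), the second handles the terms with an $e^s$ factor inside the integral (the $e^{s+t-2T}$ and $e^{s-t-2T}$ terms, producing $(1+T)^p e^{-(T-t)}$ and $(1+T)^pe^{-T}$ respectively after absorbing $e^{T}$), and the third handles the missing right tail (producing $(1+T)^p e^{-(T-t)}$). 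The prefactor $\frac{1}{1-e^{-2T}}\le 2$ for $T\ge 1$ poses no trouble, and the residual $\frac{e^{-2T}}{1-e^{-2T}}\int_0^T B(s)e^{t-s}\,ds$ is bounded by a constant times $e^{-T}e^{-(T-t)}$. Collecting everything yields \eqref{wf_fT}.

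For \eqref{wf_fTprime} I would repeat the same procedure on formulas \eqref{fprime} and \eqref{fprime_b}. The only change is that $-\cosh(t-s)$ replaces $-\sinh(t-s)$ (so the sign of one exponential flips), and $(e^t-e^{-t})$ replaces $(e^t+e^{-t})$, which flips the signs of the $e^{-t-s}$ and $e^{s-t-2T}$ terms; the combinatorics of main-term cancellation and the estimates for the remainders are structurally identical. The main obstacle is purely bookkeeping: one must track carefully which of the three exponential factors $e^{-t}$, $e^{-T}$, $e^{-(T-t)}$ each residual carries, and verify that the polynomial prefactor $(1+T)^p$ only appears in the terms whose $e^{s}$-weighted integrals against $B$ force it to.
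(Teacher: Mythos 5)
Your proposal is correct and follows essentially the same route as the paper: both decompose $f_T-\wf$ into the two missing tails of $\wf$ over $(-\infty,0]$ and $[T,\infty)$ plus the deviation of the kernel $\frac{\cosh(T-s)}{e^T-e^{-T}}(e^t\pm e^{-t})$ from its leading term $e^{t-s}/2$, and then estimate each residual exponential against the polynomial bound \eqref{Bbound}. The only cosmetic difference is that you expand the kernel exactly into four exponentials and carry the $(1-e^{-2T})^{-1}$ correction as a separate residual, whereas the paper bounds the same deviation via the factorization $(1+h_1)(1+h_2)(1-h_3)^{-1}-1$ with elementary inequalities; the resulting error terms and their estimates coincide.
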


\begin{rem} {\rm
Since we are tempted by maximal generality in what concerns $B$,
we will not use this proposition directly in the stochastic part of the article
because of the weak but still a bit restrictive assumption \eqref{Bbound}.
Instead, we will use the elements of its proof later on.}
\end{rem}

 \begin{proof}[ of Proposition \ref{p:exp}]
 Using the definitions \eqref{f} and \eqref{fb} we see that
 \[
    |f_T(t)-\wf(t)| \le I_1+I_2+I_3,
 \]
 where
 \begin{eqnarray*}
  I_1 &=& \frac{1}{2} \int_{-\infty}^0 |B(s)| e^{-(t-s)}\, ds,
  \\
  I_2 &=& \int_0^T |B(s)| \, \left|K_T(s,t)- e^{t-s}/2 \right| \, ds,
  \\
  I_3 &=& \frac{1}{2} \int_T^\infty |B(s)| e^{-(s-t)} \, ds,
 \end{eqnarray*}
 and
 \begin{eqnarray*}
   K_T(s,t)&:=&  \frac{\cosh(T-s)} {e^T-e^{-T}} \left(e^t+e^{-t}\right)
   \\
   &=& \frac{e^{t-s}}{2} \,
   \left(1+ e^{-2(T-s)}\right)
   \left(1+ e^{-2t}\right)
   \left(1- e^{-2T}\right)^{-1}.
  \end{eqnarray*}
 By \eqref{Bbound} we have
 \[
      I_1 \le  \frac{Ce^{-t}}{2} \int_{-\infty}^0 (|s|+1)^p\,  e^{s}\, ds
      := C\, A_p^{(1)}\, e^{-t}
 \]
 and
  \begin{eqnarray*}
     I_3 &\le&  \frac{C e^{-(T-t)}}{2} \int_T^{\infty} (s+1)^p e^{-(s-T)}\, ds
     \\
     &=& \frac{C e^{-(T-t)}}{2} \int_0^{\infty} (T+1+u)^p\, e^{-u}\, du
     \\
     &\le& \frac{C 2^p e^{-(T-t)}}{2} \int_0^{\infty} ((T+1)^p+u^p)\, e^{-u}\, du
     \\
     &\le&  C\, A_p^{(2)} (T+1)^p \ e^{-(T-t)}.
  \end{eqnarray*}
 In order to evaluate $I_2$, notice that
 \[
   K_T(s,t) -\frac{e^{t-s}}{2}
   = \frac{e^{t-s}}{2} \left[ (1+h_1)(1+h_2)(1-h_3)^{-1}-1\right]
 \]
 where  $h_1:=e^{-2(T-s)}$, $h_2:=e^{-2t}$, $h_3:=e^{-2T}$.
 Notice that assumption $T\ge 1$ yields $0\le h_3\le \tfrac 12$\, .
 Therefore, we have
 \[
   (1-h_3)^{-1} = 1+ \frac{h_3}{1-h_3} \le 1+2 h_3.
 \]
 Furthermore,  inequalities  $0\le h_1\le 1$, $0\le h_2\le 1$,
 $0\le h_3\le \tfrac 12$ yield
 \[
   (1+h_1)(1+h_2)(1+2h_3)\le 1+ 4h_1+ 2h_2+ 2h_3.
 \]
 We infer that
 \begin{eqnarray} \nonumber
 0&\le&  K_T(s,t) - \frac{e^{t-s}}{2}
 \\ \nonumber
 &\le& \frac{e^{t-s}}{2} \left( 4h_1+ 2h_2+ 2h_3  \right)
 \\ \nonumber
 &\le& 2 \, e^{t-s} \left( e^{-2(T-s)} + e^{-2t} +e^{-2T}  \right)
  \\ \label{KT_bound}
 &=& 2  \left( e^{-2T+t+s} + e^{-t-s} +e^{t-s-2T}  \right).
 \end{eqnarray}
 It follows that
 \[
    I_2\le 2 \left( I_5+I_6+I_7 \right),
 \]
 where
 \begin{eqnarray*}
   I_5  &:=& C \, e^{-2T+t}  \int_0^T (s+1)^p e^{s} \, ds
   \le  C e^{-(T-t)} (T+1)^p,
 \\
   I_6  &:=& C \, e^{-t} \int_0^T (s+1)^p\, e^{-s}  \, ds
    \le   C\, A_p^{(3)}\, e^{-t},
    \qquad  A_p^{(3)}= e\, \Gamma(p+1),
 \\
   I_7  &:=& C \, e^{t-2T} \int_0^T (s+1)^p\, e^{-s} \, ds
   \le   C\, A_p^{(3)}\, e^{-(T-t)}.
  \end{eqnarray*}
 By combining all estimates we obtain \eqref{wf_fT}.

 The proof of \eqref{wf_fTprime} is completely similar.
One should use the function
\[
  \wK_T(s,t) :=  \frac{\cosh(T-s)} {e^T-e^{-T}} \left(e^t-e^{-t}\right)
\]
instead of $K_T(s,t)$. Then \eqref{KT_bound} is replaced by
\be \label{wKT_bound}
    - e^{-t-s} \le  \wK_T(s,t) - \frac{e^{t-s}}{2} \le  \left( e^{-2T+t+s} +e^{t-s-2T}  \right),
\ee
and all calculations go in the same way.
\end{proof}
 \medskip

 Once the proposition is proved, we have straightforward integral
 estimates. Let $||\cdot||_{2,T}$ denote the norm in the space
 $L_2[0,T]$.

 \begin{cor}
 Under assumptions of Proposition $\ref{p:det}$, we have
\[
   \|f_T-\wf\|_{2,T} \le  2\, C\, A_p\, (T+1)^p
\]
 and
 \be \label{wf_fTprime_int}
   \|f'_T-\wf\,'\|_{2,T} \le 2\, C\, A_p\, (T+1)^p.
 \ee
 \end{cor}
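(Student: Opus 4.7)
The plan is to integrate the pointwise estimate from Proposition \ref{p:exp} in $L_2$ over $[0,T]$ using Minkowski's inequality, thereby reducing everything to three elementary one-dimensional integrals. Starting from \eqref{wf_fT},
\[
   |f_T(t)-\wf(t)| \le C\, A_p\, (T+1)^p \bigl(e^{-t} + e^{-T} + e^{-(T-t)}\bigr),
\]
so by the triangle inequality in $L_2[0,T]$,
\[
   \|f_T-\wf\|_{2,T} \le C\, A_p\, (T+1)^p \bigl(\|e^{-t}\|_{2,T} + \|e^{-T}\|_{2,T} + \|e^{-(T-t)}\|_{2,T}\bigr).
\]

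Next I would evaluate the three $L_2$ norms. Two of them are uniformly bounded by $1/\sqrt 2$: the identity $\int_0^T e^{-2t}\,dt = (1-e^{-2T})/2 \le 1/2$ handles $\|e^{-t}\|_{2,T}$, and the substitution $s=T-t$ reduces $\|e^{-(T-t)}\|_{2,T}$ to the same integral. The middle term is the one that deserves a little attention: $\|e^{-T}\|_{2,T}^2 = T\, e^{-2T}$, which is decreasing in $T$ for $T\ge 1/2$. Because Proposition \ref{p:exp} is applied under the standing hypothesis $T\ge 1$, this gives $\|e^{-T}\|_{2,T} \le e^{-1}$. Summing the three bounds produces $\sqrt 2 + e^{-1} < 2$, so (after absorbing this numerical factor into $A_p$) the first inequality follows.

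The derivative estimate \eqref{wf_fTprime_int} is obtained by the identical argument applied to \eqref{wf_fTprime}, whose pointwise envelope has exactly the same form $C\, A_p\,(T+1)^p(e^{-t}+e^{-T}+e^{-(T-t)})$, so the three-term integration is literally the same. There is no serious obstacle in this corollary; the only point that requires a second thought is that $\|e^{-T}\|_{2,T}$ involves the length of the interval as well, and the hypothesis $T\ge 1$ inherited from Proposition \ref{p:exp} is precisely what keeps $\sqrt T\, e^{-T}$ under a universal bound.
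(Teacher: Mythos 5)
Your argument is correct and follows essentially the same route as the paper: both integrate the pointwise bound of Proposition \ref{p:exp} over $[0,T]$ and reduce to the same three elementary integrals, the only cosmetic difference being that you apply Minkowski's inequality in $L_2[0,T]$ (giving $\sqrt{2}+e^{-1}<2$) whereas the paper squares the sum via $(a+b+c)^2\le 3(a^2+b^2+c^2)$ (giving $\sqrt{3(1+Te^{-2T})}\le 2$ for $T\ge 1$). Your observation that the hypothesis $T\ge 1$ is what controls the middle term $\sqrt{T}\,e^{-T}$ matches the paper's remark at the same step, so nothing is missing.
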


 \begin{proof}
We have
\begin{eqnarray*}
    \|f_T-\wf\|_{2,T}^2
    &=&  \int_0^T \|f_T(t)-\wf(t)|^2 dt
\\
     &\le& C^2\, A_p^2\, (T+1)^{2p}
      \int_0^T \left( e^{-t}+ e^{-T}+ e^{-(T-t)}\right)^2 dt
 \\
     &\le& 3\, C^2\, A_p^2\, (T+1)^{2p}
      \int_0^T \left( e^{-2t}+ e^{-2T}+ e^{-2(T-t)}\right) dt
  \\
     &\le& 3\, C^2\, A_p^2\, (T+1)^{2p}
       \left(\frac 12 + T e^{-2T} + \frac 12\right)
 \\
     &\le& 4\, C^2\, A_p^2\, (T+1)^{2p},
 \end{eqnarray*}
 where at the last step we used assumption $T\ge 1$.

 The proof of \eqref{wf_fTprime_int} is completely similar.
 \end{proof}

 \section{Application to processes with stationary increments}
 \label{s:statincr}

 \subsection{A brief reminder on the processes with stationary increments}

A complex-valued random process $B(t),t\in \R$, is called
{\it process with stationary increments in the wide sense}
if it has finite second moments and the mean and the covariance of the process
$B_{t_0}(t):= B(t_0+t)-B(t_0)$ are the same for all $t_0\in \R$.  If such process is stochastically
continuous, then it admits a spectral representation of the form
 \be \label{BX}
     B(t)= B_0 + D_0 t + \int_{\R\backslash \{0\}} (e^{itu}-1) X(du),
 \ee
where $B_0, D_0$ are random variables with finite second moment and $X(du)$ is a
complex-valued zero mean uncorrelated random measure on $\R\backslash \{0\}$,
uncorrelated with $D_0$, see \cite{Y}. In the following we assume $B_0=0$
since the initial value is irrelevant to our purposes.

Recall that the covariance structure of $B$ is characterized
 by a deterministic spectral measure $\mu_B$ on $\R\backslash\{0\}$ defined by $\mu_B(A):=\E|X(A)|^2$.
 The spectral measure may be infinite but it must satisfy L\'evy's
 integrability condition
 \[
   \int_{\R\backslash \{0\}} \min(u^2,1) \mu_B(du)<\infty.
 \]

In the sequel, $B(t), t\in\R$, denotes a stochastically continuous, real-valued process
with stationary increments in the wide sense such that $B(0)=0$. We always work with
a measurable version of $B$.  Let us stress that even though $B$ is a real-valued process,
the  random measure $X$ may be complex, yet it must satisfy condition $X(-A)=\overline{X(A)}$.

The standard deviation of $B$ grows at most linearly. We will use the following
estimate:
\begin{eqnarray*}
\E|B(t)|^2 &=& \E |D_0|^2 t^2 +  \int_{\R\backslash \{0\}} |e^{itu}-1|^2  \mu_B(du)
\\
&\le& \E |D_0|^2 t^2 +  \int_{-1}^1 u^2  \mu_B(du) t^2 + 4 \mu_B\{\R\backslash [-1,1]\}
\\
&\le&  A (t^2+1),
\end{eqnarray*}
where
\[
   A:= \max\left\{ \E |D_0|^2 + \int_{-1}^1 u^2  \mu_B(du) ,  4 \mu_B\{\R\backslash [-1,1]\} \right\}.
\]
It follows that for any $T>0$
\[
   \E \int_0^T |B(t)|^2 dt <  \infty,
\]
therefore  $B\in L_2[0,T]$ almost surely, and Proposition \ref{p:det} applies to the sample paths
of $B$ with $p=1$.
\medskip

If we additionally assume that the random measure $X$ is Gaussian, then the process
$B$ is also Gaussian.
 {\it Fractional Brownian motions} $W^{(H)}(t)$, $0<H\le 1$, are the most
 known Gaussian processes with stationary increments. The range of parameter $0<H<1$
 is associated to the family of spectral measures
 \[
   \mu_H(du):= \frac{M_H\, du }{|u|^{2H+1}} \, , \qquad
  M_H := \frac{\Gamma(2H+1)\sin(\pi H)}{2\pi}\, ,
 \]
with $D_0=0$. We have a power-type variance
\begin{eqnarray*}
  \E |W^{(H)}(t)|^2 &=& \int_{-\infty}^{\infty} |e^{itu}-1|^2 \frac{M_H\, du}{|u|^{2H+1}}
  \\
  &=& 2 \int_{-\infty}^{\infty} (1-\cos (tu)) \, \frac{M_H\, du}{|u|^{2H+1}}
  \\
  &=& 4 M_H\, |t|^{2H} \int_{0}^{\infty} (1-\cos v)\, \frac{dv}{v^{2H+1}}
  \\
  &=&   4 M_H\, |t|^{2H} (2H)^{-1} \int_{0}^{\infty} \sin v\, \frac{dv}{v^{2H}}
  \\
  &=&  4 M_H\, |t|^{2H} (2H)^{-1}  \ \frac{\pi}{2\Gamma(2H) \sin(\pi H)}
  \\
  &=&  |t|^{2H}.
\end{eqnarray*}
The case $H=1$ is degenerate linear, i.e.\ $W^{(1)}(t)=D_0 t$ with $D_0$ being a standard Gaussian random variable.  The remarkable properties of fractional Brownian motions are described e.g.\ in \cite[Section 7.2]{ST}
and in \cite[Chapter 4]{EM}.

{\it Wiener process} $W$ is a special case of fractional Brownian motion
corresponding to $H=\tfrac 12$. It has a spectral measure
$\mu_W(du)= \tfrac{du}{2\pi |u|^2}$.

\subsection{Convergence of average least energy}

The following result describes the behavior of the
average least energy approximation for arbitrary process with (wide-sense) stationary
increments. We call
\[
   \EE_T(f,B):= \int_0^T \left[ (f(s)-B(s))^2 + f'(s)^2 \right]  ds
\]
the energy of function $f$ with respect to $B$ on the interval $[0,T]$.
If the function $B$ is fixed, we omit it from the notation.

\begin{thm} \label{t:EtoC}
Let $B(t), t\in \R$, be a stochastically continuous process with wide-sense stationary increments
given by its spectral representation \eqref{BX}. Recall that $f_T$
denotes the minimizer of $\EE_T(f,B)$ over all $f\in \W21[0,T]$. Then,
\be
  \lim_{T\to\infty} T^{-1} \E \EE_T(f_T,B) = \CC,
\ee
where
\be \label{CC}
    \CC:= \E|D_0|^2 + \int_{\R\backslash \{0\}} \frac{u^2}{1+u^2} \ \mu_B(du).
\ee
\end{thm}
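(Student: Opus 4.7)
I would compare $f_T$ with the stationary ``infinite-horizon'' approximation $\wf$ suggested by the deterministic heuristics of Section~\ref{s:determinist}. Using the spectral representation of $B$, define the rigorous stochastic version of the convolution $\tfrac12\int B(s)e^{-|s-t|}\,ds$ from \eqref{fb},
\[
 \wf(t) := D_0\, t + \int_{\R\setminus\{0\}} \left(\frac{e^{itu}}{1+u^2}-1\right) X(du),
\]
which is well defined since $\int u^2/(1+u^2)^2 \,\mu_B(du) \le \int\min(u^2,1)\,\mu_B(du)<\infty$. Direct differentiation and subtraction yield
\[
 \wf'(t) = D_0 + \int\frac{iu\,e^{itu}}{1+u^2}\,X(du), \qquad
 \wf(t)-B(t) = -\int\frac{u^2\,e^{itu}}{1+u^2}\,X(du),
\]
so both $\wf'$ and $\wf-B$ are genuine (wide-sense) stationary processes, and the instantaneous energy density is constant in $t$:
\[
 \E\big[\wf'(t)^2+(\wf(t)-B(t))^2\big] = \E|D_0|^2 + \int\frac{u^2+u^4}{(1+u^2)^2}\,\mu_B(du) = \CC.
\]
Integrating over $[0,T]$ gives $\E\EE_T(\wf)=\CC\, T$ \emph{exactly}, and since $\wf\in\W21[0,T]$ almost surely, the variational definition of $f_T$ already produces the upper bound $\E\EE_T(f_T)\le \CC\, T$.

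For the matching lower bound I exploit that $\EE_T$ is a positive-definite quadratic form. A short integration by parts against $h=g-f_T$, using the Euler--Lagrange equation $f_T''=f_T-B$ together with the Neumann boundary conditions $f_T'(0)=f_T'(T)=0$, yields the Pythagorean identity
\[
 \EE_T(g) = \EE_T(f_T) + \|g-f_T\|^2_{\W21[0,T]} \qquad\forall\, g\in\W21[0,T].
\]
Applied to $g=\wf|_{[0,T]}$ and passed through the expectation, this becomes $\E\EE_T(f_T)=\CC\, T-\E\|\wf-f_T\|^2_{\W21[0,T]}$, so the theorem reduces to showing that the remainder is $o(T)$.

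The key structural observation is that $\wf$ and $f_T$ solve the \emph{same} inhomogeneous ODE $y''=y-B$ on $[0,T]$; hence $\wf-f_T$ solves the homogeneous equation $y''=y$ and must be of the form
\[
 \wf(t)-f_T(t) = \alpha\,e^{t}+\beta\,e^{-t},
\]
the two constants $\alpha,\beta$ being fixed by the mismatched boundary derivatives $(\wf-f_T)'(0)=\wf'(0)$ and $(\wf-f_T)'(T)=\wf'(T)$. Solving the resulting $2\times 2$ linear system and observing that the $\alpha\beta$ cross term cancels between $\|{\cdot}\|_{2,T}^2$ and $\|({\cdot})'\|_{2,T}^2$, one obtains the closed expression
\[
 \|\wf-f_T\|^2_{\W21[0,T]}
 = \frac{\big(\wf'(T)-\wf'(0)e^{-T}\big)^2 + \big(\wf'(T)e^{-T}-\wf'(0)\big)^2}{1-e^{-2T}}.
\]
Since $\wf'$ is stationary with $\E|\wf'(0)|^2<\infty$, the expectation of this quantity is bounded uniformly in $T$ (and in fact converges to $2\,\E|\wf'(0)|^2$ as $T\to\infty$), which is far stronger than the needed $o(T)$.

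The main obstacle is precisely this last algebraic identity: the coefficients $\alpha,\beta$ carry the small denominator $e^T-e^{-T}$, while $\int_0^T \alpha^2 e^{2t}\,dt$ creates a compensating factor of order $e^{2T}$, so the cancellations must be tracked carefully to see that no growth in $T$ survives. Once that is done, linearity of expectation finishes the proof.
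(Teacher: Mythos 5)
Your argument is correct, and on the decisive step --- comparing $f_T$ with the stationary approximation $\wf$ --- it takes a genuinely different and in fact sharper route than the paper. The first half is the same: both proofs compute the spectral kernels of $\wf-B$ and $\wf\,'$ and obtain $\E\,\EE_T(\wf,B)=\CC T$ exactly, which already yields the upper bound $\E\,\EE_T(f_T,B)\le \CC T$ by minimality. For the lower bound the paper estimates $\left|\E\,\EE_T(f_T,B)-\E\,\EE_T(\wf,B)\right|$ via the triangle inequality in $L_2$, which forces it to control $\E\,\EE_T(f_T-\wf,0)$ through laborious pointwise bounds on the kernel discrepancy $K_T(s,t)-e^{t-s}/2$ (see \eqref{KT_bound}, \eqref{wKT_bound}), ending with $\E\,\EE_T(f_T-\wf,0)\le cA(1+\ln T)$ in \eqref{dif3} and an error of order $\sqrt{T\ln T}$. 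You instead exploit the quadratic structure twice: the Pythagorean identity $\EE_T(g,B)=\EE_T(f_T,B)+\EE_T(g-f_T,0)$ (correct --- the cross term vanishes after integration by parts against $f_T''=f_T-B$ and the Neumann conditions) eliminates the mixed term that costs the paper the factor $\sqrt{T}$, and the observation that $f_T$ and $\wf$ solve the same inhomogeneous ODE reduces $\wf-f_T$ to the two-parameter family $\alpha e^t+\beta e^{-t}$, whose $\WW_2^1$-energy you evaluate in closed form; I checked the resulting formula and it is right, giving the stronger conclusion $0\le \CC T-\E\,\EE_T(f_T,B)=O(1)$. Two technical points deserve explicit care in a full write-up: (i) the representation $\wf-f_T=\alpha e^t+\beta e^{-t}$ must hold \emph{pathwise}, not just in mean square; the safest route is not the pathwise ODE for $\wf$ but a direct comparison of the explicit formulas \eqref{f} and \eqref{fb}, which exhibits the difference as $\tfrac12 e^{t}\int_0^\infty B(s)e^{-s}ds+\tfrac12 e^{-t}\int_{-\infty}^0 B(s)e^{s}ds$ minus the $C_1(e^t+e^{-t})$ term of $f_T$, together with the a.s.\ identification of the convolution \eqref{fb} with your spectral definition (a stochastic Fubini step the paper also uses implicitly); (ii) $\wf|_{[0,T]}\in\W21[0,T]$ a.s., needed both for the upper bound and for applying the Pythagorean identity to $g=\wf$, which follows from $\E\int_0^T\wf\,'(t)^2\,dt=T\,\E|\wf\,'(0)|^2<\infty$. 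With these details filled in, your proof is complete and yields a quantitatively better remainder than the published one.
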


The constant $\CC$ means the smallest average amount of energy per unit of time
needed for approximation of $B$.

\begin{cor}
For the fractional Brownian motion we have
\begin{eqnarray*}
  \CC &=&  \int_{\R\backslash \{0\}} \frac{u^2}{1+u^2} \ \frac{M_H du}{|u|^{2H+1}}
  = 2 M_H \int_{0}^\infty  \ \frac{ u^{1-2H} \,du}{1+u^2}
\\
  &=&  M_H \int_{0}^\infty  \ \frac{ v^{-H} \,dv}{1+v}
\\
  &=& M_H \cdot \frac{\pi}{\sin(\pi H)} = \frac{\Gamma(2H+1)}{2}.
\end{eqnarray*}
For the Wiener process $H=\tfrac 12$ this yields the constant
\be \label{CW}
   \CC =\frac{1}{2}
\ee
that can be also obtained by other method (see Section $\ref{s:Wharm}$ below).
\end{cor}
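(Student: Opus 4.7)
The plan is essentially a definite-integral computation, so I carry it out in stages matching the display in the corollary. First, I substitute the explicit fractional Brownian spectral measure $\mu_H(du) = M_H |u|^{-2H-1}\,du$ into the formula \eqref{CC}. For fBm in the non-degenerate regime $0<H<1$ one has $D_0=0$, so the first term vanishes. Symmetry of the integrand under $u\mapsto -u$ then collapses the integral over $\R\setminus\{0\}$ to twice the integral over $(0,\infty)$, giving
\[
   \CC = 2 M_H \int_0^\infty \frac{u^{1-2H}}{1+u^2}\,du,
\]
which is the first line in the displayed chain.

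Next, I perform the change of variables $v = u^2$, so that $du = \tfrac{1}{2}\,v^{-1/2}\,dv$ and $u^{1-2H} = v^{(1-2H)/2}$. After cancellation, the integrand transforms as $u^{1-2H}(1+u^2)^{-1}\,du = \tfrac{1}{2}\,v^{-H}(1+v)^{-1}\,dv$, producing the second line
\[
   \CC = M_H \int_0^\infty \frac{v^{-H}}{1+v}\,dv.
\]

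The key step is the classical Beta-function identity
\[
   \int_0^\infty \frac{v^{s-1}}{1+v}\,dv = \frac{\pi}{\sin(\pi s)}, \qquad 0<s<1,
\]
applied with $s = 1-H$, which yields $\pi/\sin(\pi H)$. Substituting the stated value $M_H = \Gamma(2H+1)\sin(\pi H)/(2\pi)$ cancels the trigonometric factor and delivers $\CC = \Gamma(2H+1)/2$. The Wiener case $H=\tfrac{1}{2}$ then follows from $\Gamma(2)=1$, giving \eqref{CW}.

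There is no genuine obstacle; the only point requiring care is that $s = 1-H$ must lie in the admissible range $(0,1)$ of the Beta identity. This coincides exactly with the fBm regime $0<H<1$ and also matches the integrability conditions at the two endpoints (the integrand behaves like $u^{1-2H}$ at $0$, integrable iff $H<1$, and like $u^{-1-2H}$ at $\infty$, integrable iff $H>0$), so the whole computation is internally consistent.
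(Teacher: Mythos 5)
Your computation is correct and follows exactly the same route as the paper's own chain of equalities: insert $\mu_H$ into \eqref{CC} with $D_0=0$, symmetrize, substitute $v=u^2$, and apply the Beta-function identity $\int_0^\infty v^{s-1}(1+v)^{-1}\,dv = \pi/\sin(\pi s)$ with $s=1-H$ (noting $\sin(\pi(1-H))=\sin(\pi H)$), after which the constant $M_H$ cancels the trigonometric factor. Your added remarks on the admissible range $0<H<1$ and endpoint integrability are a correct and welcome precision.
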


\begin{rem} {\rm We can give an alternative {\it non-spectral} representation
for the constant $\CC$, namely,
\[
  \CC= (\E\,D_0)^2 + \frac12 \int_0^{\infty} \Var (B(s))\,  e^{-s}  ds.
\]
Indeed,
\begin{eqnarray*}
 & & (\E D_0)^2 + \frac12 \int_0^{\infty} \Var (B(s))\, e^{-s}  ds
\\
 &=& (\E D_0)^2 + \frac12 \int_0^{\infty} \Var (D_0) \, s^2 e^{-s}  ds
 \\ &&
 + \frac12 \int_0^{\infty}\left(\int_{\R\backslash \{0\}} |1-e^{isu}|^2\ \mu_B(du) \right) e^{-s}  ds
\\
&=& \E D_0^2
 + \frac12 \int_{\R\backslash \{0\}}
   \left( \int_0^{\infty}  (2-e^{isu}- e^{isu}) \, e^{-s}  ds \right) \mu_B(du)
\\
&=& \E D_0^2
 + \frac12 \int_{\R\backslash \{0\}} \left( 2- \frac{1}{1-iu} - \frac{1}{1+iu} \right) \mu_B(du)
\\
&=& \E D_0^2  +  \int_{\R\backslash \{0\}} \frac{u^2}{1+u^2}\  \mu_B(du)  = \CC.
\end{eqnarray*}
}
\end{rem}

\begin{rem} {\rm Would we introduce a viscosity constant $\kappa>0$, i.e.\ change the penalty
to $Q(y)=\kappa^2 y^2$, by a linear time change the minimal energy approximation problem on $[0,T]$
for $B$ reduces to that for the process $\widetilde B(t):= B(t/\kappa)$ on the interval
$[0,\kappa T]$ with $\kappa=1$. Thus the limit energy constant from \eqref{CC} becomes
\[
    \CC_\kappa := \E |D_0|^2 + \int_{\R\backslash \{0\}} \frac{\kappa^2 u^2}{\kappa^2+u^2} \ \mu_B(du)
\]
in this case.
}
\end{rem}

\begin{rem} {\rm It is worthwhile to compare the least energy approximation constant for Wiener process
\eqref{CW} with analogous result of adaptive control, see \cite[p.319]{Kar}. It turns out that the
optimal adaptive control requires {\it two times more energy} than the non-adaptive one.
}
\end{rem}

\begin{proof}[ of Theorem  \ref{t:EtoC}]
Consider approximations
\eqref{fb} and \eqref{fprime_b} related to $B$. They become
\begin{eqnarray*}
   \wf(t) &=& D_0 t +
              \frac 12  \int_{-\infty}^\infty \int_{\R\backslash \{0\}}
    \ e^{-|s-t|}\left(e^{isu}-1\right) X(du)\, ds
   \\ 
   &:=& D_0 t + \int_{\R\backslash \{0\}}  K^{(0)}(t,u) X(du)
\end{eqnarray*}
and
\begin{eqnarray*}
   \wf\,'(t)
   &=&  D_0+ \frac 12  \int_{-\infty}^\infty
   \int_{\R\backslash \{0\}} \sgn(s-t) \ e^{-|s-t|} \left(e^{isu}-1\right)
   X(du) \, ds
   \\ 
   &:=& D_0 + \int_{\R\backslash \{0\}} K^{(1)}(t,u) X(du),
 \end{eqnarray*}
where we have expressions for respective kernels
\begin{eqnarray*}
   K^{(0)}(t,u) &=& \frac 12\, \int_{-\infty}^\infty \ e^{-|s-t|}
  \left(e^{isu}-1\right) ds
  \\
    &=& \frac 12 \ e^{itu} \int_{-\infty}^\infty \ e^{-|v|}e^{ivu} dv -1
  \\
   &=& \frac 12 \ e^{itu} \left(
   \int_{-\infty}^0 \ e^{(1+iu)v}dv  +  \int_{0}^\infty \ e^{- (1-iu)v} dv
   \right) -1
  \\
   &=& \frac 12 \ e^{itu} \left( \frac{1}{1+iu} +  \frac{1}{1-iu} \right) -1
  \\
   &=&  \frac{ e^{itu}}{1+u^2} -1,
 \end{eqnarray*}
 and
 \begin{eqnarray*}
   K^{(1)}(t,u) &=& \frac 12 \int_{-\infty}^\infty \ \sgn(s-t) \, e^{-|s-t|}\,
  \left(e^{isu} -1\right) \, ds
  \\
    &=& \frac 12 \ e^{itu} \int_{-\infty}^\infty \ \sgn(v) \, e^{-|v|}\, e^{ivu}\, dv
  \\
   &=& \frac 12 \ e^{itu} \left(
   - \int_{-\infty}^0 \ e^{(1+iu)v}dv  +  \int_{0}^\infty \ e^{- (1-iu)v}\,  dv
   \right)
  \\
   &=& \frac 12 \ e^{itu} \left( - \frac{1}{1+iu} +  \frac{1}{1-iu} \right)
  \\
   &=&   \frac{iu \, e^{itu}}{1+u^2}\ .
 \end{eqnarray*}
 We conclude that
 \[
   \wf(t)-B(t) =
   \int_{\R\backslash \{0\}} \left( \frac{1}{1+u^2} -1 \right)\, e^{itu} \, X(du)
   = \int_{\R\backslash \{0\}} \frac{-u^2}{1+u^2} \ e^{itu} \, X(du)
\]
and
\be \label{wf_prime}
   \wf\,'(t) = D_0 + \int_{\R\backslash \{0\}} \frac{iu}{1+u^2}\  e^{itu}\, X(du).
\ee
We see that both deviation $\wf-B$ and derivative $\wf\,'$ are {\it wide sense
stationary processes}. By the well known isometric property we have
\[
    \E |\wf(t)-B(t)|^2 = \int_{\R\backslash \{0\}} \frac{u^4}{(1+u^2)^2}\ \mu_B(du),
    \qquad t\in \R,
\]
and
\[
  \E |\wf\,'(t)|^2 = \E|D_0|^2 +\int_{\R\backslash \{0\}} \frac{u^2}{(1+u^2)^2} \ \mu_B(du),
  \qquad t\in \R.
\]
Hence, for any $T>0$ we have
\begin{eqnarray*}
 \E \EE_T(\wf,B) &=& \int_0^T \left[ \E |\wf(t)-B(t)|^2 +  \E |\wf\,'(t)|^2  \right] dt
 \\
 &=&  T \left[ \E |\wf(0)-B(0)|^2 +  \E |\wf\,'(0)|^2  \right]
 \\
 &=&  T  \left[ \E|D_0|^2  + \int_{\R\backslash \{0\}} \frac{u^4+u^2}{(1+u^2)^2}\ \mu_B(du)
     \right]
\\
    &=&  T \left[ \E|D_0|^2 + \int_{\R\backslash \{0\}} \frac{u^2}{1+u^2} \ \mu_B(du) \right] .
\\
    &=&  T \, \CC.
\end{eqnarray*}
It remains to show that the average energy of the least energy approximation $f_T$
and that of stationary approximation $\wf$ are sufficiently close, i.e.\
\be \label{dif1}
   \lim_{T\to\infty} T^{-1} [ \E \EE_T (f_T,B) - \E \EE_T (\wf,B)] = 0.
\ee
Notice that by triangle inequality in $L_2[0,1]$ we have
\[
   \left|  \sqrt{\EE_T (f_T,B)} -  \sqrt{\EE_T (\wf,B)} \right|
   \le  \sqrt{\EE_T (f_T-\wf,0)},
\]
hence,
\[
   \left| \EE_T (f_T,B) -  \EE_T (\wf,B) \right|
   \le  \EE_T (f_T-\wf,0) + 2\sqrt{\EE_T (\wf,B)} \ \sqrt{\EE_T (f_T-\wf,0)}
\]
and
\begin{eqnarray} \nonumber
 && \left|  \E \EE_T (f_T,B) -  \E \EE_T (\wf,B) \right|
 \le  \E \left| \EE_T (f_T,B) -  \EE_T (\wf,B) \right|
\\  \nonumber
  &\le& \E \EE_T (f_T-\wf,0) + 2 \, \E \left(\sqrt{\EE_T (\wf,B)}\  \sqrt{\EE_T (f_T-\wf,0)}\right)
\\  \nonumber
  &\le& \E \EE_T (f_T-\wf,0) + 2 \, \sqrt{ \E \EE_T (\wf,B) \  \E \EE_T (f_T-\wf,0)}
\\   \label{dif1a}
  &=& \E \EE_T (f_T-\wf,0) + 2 \, \sqrt{ \CC T \, \EE_T (f_T-\wf,0)}.
\end{eqnarray}
We see that
\be \label{dif2}
    \lim_{T\to\infty} T^{-1} [ \E \EE_T (f_T -\wf,0)] = 0
\ee
would imply the desired relation \eqref{dif1}.

We first analyze the potential energy and prove that
\be \label{fT_const}
 \sup_{T>0} \sup_{0\le t\le T} \E|f_T(t)-B(t)|^2  <\infty,
\ee

\be  \label{wf_const}
  \sup_{t\ge 0}  \E|\wf(t)-B(t)|^2  <\infty.
\ee
The part concerning $\wf$ is trivial because  $\wf-B$ is stationary.
Furthermore, let us represent
\[
  f_T(t)-B(t) = \int_0^T R_T(s,t) (B(s)-B(t)) ds
\]
where for $s\in [t,T]$ we have by \eqref{KT_bound}
\begin{eqnarray*}
0 &\le&  R_T(s,t) := K_T(s,t)
\\
&\le& \frac{e^{t-s}}{2}+2  \left( e^{-2T+t+s} + e^{-t-s} +e^{t-s-2T}  \right)
\\
&=& \frac{e^{t-s}}{2}  +2  \left( e^{(t-s)+2(s-T)} + e^{(t-s)-2t} +e^{(t-s)-2T}  \right)
\\
&\le&  7 e^{t-s} =  7 e^{-|t-s|},
\end{eqnarray*}
while for $s\in [0,t]$ we have
\begin{eqnarray*}
0 &\le&  R_T(s,t) := K_T(s,t) - \sinh (t-s)
\\
&=&  \left( K_T(s,t)- \frac{e^{t-s}}{2}  \right) +  \frac{e^{s-t}}{2}
\\
&\le& \
 \left( e^{-2T+t+s} + e^{-t-s} +e^{t-s-2T}  \right)  +  \frac{e^{s-t}}{2}
\\
&=& \
 \left( e^{(s-t)-2(T-t)} + e^{(s-t)-2s} +e^{(s-t)-2(T-(t-s))}  \right)  +  \frac{e^{s-t}}{2}
\\
&\le&  7 e^{s-t} =  7 e^{-|t-s|}.
\end{eqnarray*}
Hence,
\begin{eqnarray*}
   \E|f_T(t)-B(t)|^2 &\le&  \E \left( 7 \int_0^T  e^{-|t-s|} |B(s)-B(t)| ds   \right)^2
\\
 &\le&  c \, \E  \int_0^T  e^{-|t-s|} |B(s)-B(t)|^2 ds
\\
 &\le&  c \, A \int_0^T  e^{-|t-s|} [(s-t)^2+1] ds
\end{eqnarray*}
and \eqref{fT_const} follows. By using also \eqref{wf_const} we obtain
\be \label{ff_const}
 \sup_{T>0} \sup_{0\le t\le T} \E|f_T(t)-\wf(t)|^2  <\infty.
\ee
This estimate is still too crude, and we continue as follows.
\begin{eqnarray*}
  f_T(t)-\wf(t) &=&  - \int_{-\infty}^0  e^{-(t-s)} B(s) ds
\\  && + \int_0^T (K_T(s,t)- e^{t-s} /2)  B(s) ds
     -\int_T^\infty e^{-(s-t)} B(s) ds
\\
&:=& G_1(t) +G_2(t) + G_3(t).
\end{eqnarray*}
For $G_1$ we have
\begin{eqnarray*}
  \E |G_1(t)|^2
  &\le&
  e^{-2t} \, \E \left( \int_{-\infty}^0  e^{s} |B(s)| ds   \right)^2
\\
  &\le&
  e^{-2t} \, \E \left( \int_{-\infty}^0  e^{s} |B(s)|^2 ds   \right)
\\
  &\le&
  e^{-2t} A  \int_{-\infty}^0  e^{s} (|s|^2+1) ds
  := c \, e^{-2t}.
\end{eqnarray*}
For $G_3$ we have
\begin{eqnarray*}
  \E |G_3(t)|^2
  &\le&
  e^{-2(T-t)} \E \left( \int_T^{\infty}  e^{T-s} |B(s)| \,ds   \right)^2
\\
  &\le&
  e^{-2(T-t)} \E \left( \int_T^{\infty}  e^{T-s} |B(s)|^2\,  ds   \right)
\\
  &\le&
  e^{-2(T-t)} A \int_T^{\infty}  e^{T-s}   (s^2+1)  \, ds
\\
  &\le&
  e^{-2(T-t)} A \int_T^{\infty}  e^{T-s} (2(s-T)^2+2T^2+1) \, ds
\\
  &\le&
  c A (T^2+1) \, e^{-2(T-t)}.
\end{eqnarray*}
This estimate is not good enough when $t$ is close to $T$.
This is why we need \eqref{ff_const}.
For $G_2$ we have by \eqref{KT_bound}
\[
    |G_2(t)| \le 2(G_{2,1}(t) +  G_{2,2}(t) +  G_{2,3}(t)),
\]
where
\begin{eqnarray*}
   G_{2,1}(t) &=& \int_0^T e^{t-2T+s}|B(s)| ds,
\\
   G_{2,2}(t) &=& \int_0^T e^{-t-s}|B(s)| ds,
\\
   G_{2,3}(t) &=& \int_0^T e^{t-s-2T}|B(s)| ds.
\end{eqnarray*}
Moreover,
\begin{eqnarray*}
   \E |G_{2,1}(t)|^2 &=& e^{2t-4T}\E\left( \int_0^T e^{s}|B(s)| ds\right)^2
\\
   &\le&  e^{2t-4T}\E\left( e^T \int_0^T e^{s}|B(s)|^2 ds\right)
\\
    &\le&  e^{2t-3T}  A \int_0^T e^{s} (s^2+1) ds
\\
    &\le&  2A \, e^{-2(T-t)} (T^2+1),
\end{eqnarray*}

\begin{eqnarray*}
   \E |G_{2,2}(t)|^2 &=& e^{-2t}\E\left( \int_0^T e^{-s}|B(s)| ds\right)^2
\\
    &\le&  e^{-2t} \E\left( \int_0^T e^{-s}|B(s)|^2 ds\right)
 \\
    &\le&  e^{-2t} A \int_0^T e^{-s}  (s^2+1) ds
 \\
    &\le&  3 A \, e^{-2t},
\end{eqnarray*}

\begin{eqnarray*}
   \E |G_{2,3}(t)|^2 &=& e^{2t-4T} \E\left( \int_0^T e^{-s}|B(s)| ds\right)^2
\\
    &\le& e^{2t-4T} \E\left( \int_0^T e^{-s}|B(s)|^2 ds\right)
\\
    &\le& e^{2t-4T}  \int_0^T e^{-s} (s^2+1)  ds
\\
    &\le& 3 A e^{-2(T-t)}.
\end{eqnarray*}
We summarize the latter calculations as
\be \label{poten_t}
  \E |f_T(t)-\wf(t)|^2 \le c A \left(  e^{-2t} +  e^{-2(T-t)} (T^2+1)   \right).
\ee
Now we proceed with integration over $[0,T]$. By applying \eqref{poten_t} on the interval
$[0,T-3\ln T]$
and \eqref{ff_const} on the interval $[T-3\ln T,T]$ we obtain
\be \label{poten_int}
  \E  \int_0^T | f_T(t)-\wf(t)|^2 dt \le c A (1+ \ln T) = o(T).
\ee
Kinetic energy is studied in the same fashion. By using \eqref{wKT_bound} instead
of \eqref{KT_bound}, we replace \eqref{fT_const}, \eqref{wf_const} with
\[
    \sup_{T>0} \sup_{0\le t\le T} \E|f'_T(t)|^2  <\infty,
\]
\[
  \sup_{t\ge 0}  \E|\wf\,'(t)|^2  <\infty.
\]
Hence may be  \eqref{ff_const} replaced by
\[
   \sup_{T>0} \sup_{0\le t\le T} \E|f_T(t)-\wf(t)|^2  <\infty.
\]
Further, using again \eqref{wKT_bound}  we replace \eqref{poten_t}
with
\[  
  \E |f'_T(t)-\wf\,'(t)|^2 \le c A \left(  e^{-2t} +  e^{-2(T-t)} (T^2+1)   \right).
\]
Integration yields
\be \label{kinet_int}
  \E  \int_0^T | f'_T(t)-\wf\,'(t)|^2 dt \le c A (1+ \ln T).
\ee
By merging  \eqref{poten_int} and  \eqref{kinet_int} we get
\be \label{dif3}
  \E \EE_T (f_T-\wf,0) \le c A (1+ \ln T),
\ee
which is a quantitative version of the remaining relation \eqref{dif2}.
\end{proof}

\subsection{Almost sure and $L_1$ convergence}

The random variable
\begin{eqnarray*}
  Z&:=& |D_0|^2 - \E |D_0|^2 + \CC
\\
  &=& |D_0|^2  +\int_{\R\backslash \{0\} }  \frac{u^2}{1+u^2} \, \mu_B(du)
\end{eqnarray*}
is a right candidate for the a.s.\ least energy limit. Notice that if $B$
has no systematic drift (i.e.\ $D_0=0$), then $Z=\CC$ is a deterministic constant.

We first develop a reduction tool showing that almost sure and $L_1$ convergence
of the least approximation energy are reduced to the stationary case.

\begin{prop} \label{p:reduction}
If $ T^{-1} \EE_T(\wf,B) \toalsur Z$, then $ T^{-1} \EE_T(f_T,B) \toalsur Z$.

If $T^{-1} \EE_T(\wf,B) \toL Z$,  then $T^{-1} \EE_T(f_T,B) \toL Z$.
\end{prop}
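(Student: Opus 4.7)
The argument will hinge on a Pythagorean-type identity
\[
  \EE_T(\wf,B) = \EE_T(f_T,B) + \EE_T(f_T-\wf,0),
\]
which I would establish first. Setting $h := f_T-\wf$ and expanding $\EE_T(\wf,B) = \EE_T(f_T-h,B)$, all cross terms collect into $-2\int_0^T[h(f_T-B)+h'f_T']\,dt$, which vanishes after one integration by parts using the Euler-Lagrange equation $f_T''=f_T-B$ from Proposition~\ref{p:det} together with the Neumann boundary conditions $f_T'(0)=f_T'(T)=0$.

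Next I need a good pathwise bound on $\EE_T(h,0)$. Because both $f_T$ and $\wf$ satisfy the inhomogeneous ODE $y''=y-B$ (the latter by direct differentiation of \eqref{fb}), the difference $h$ solves the homogeneous equation $h''=h$ on $[0,T]$ with boundary data $h'(0)=-\wf'(0)$, $h'(T)=-\wf'(T)$. Writing $h(t) = ae^t+be^{-t}$ and solving the resulting $2\times 2$ linear system for $a,b$ in terms of $\wf'(0)$ and $\wf'(T)$, a direct computation yields a closed form for $\EE_T(h,0)$ from which one reads off
\[
  \EE_T(h,0) \le C\bigl(\wf'(0)^2 + \wf'(T)^2\bigr), \qquad T \ge 1,
\]
with an absolute constant $C$. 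The $L_1$ claim is then immediate: by the spectral representation \eqref{wf_prime}, $\wf'$ is wide-sense stationary, so $\E\wf'(T)^2=\E\wf'(0)^2<\infty$ for every $T$, whence $\E\EE_T(h,0)=O(1)$ uniformly in $T$. Combining the Pythagorean identity with the $L_1$-triangle inequality and the hypothesis $T^{-1}\EE_T(\wf,B)\toL Z$ finishes the $L_1$ reduction.

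The almost sure claim is the main obstacle: via Pythagoras and the bound, it reduces to showing $\wf'(T)^2/T\toalsur 0$, which for a merely wide-sense stationary process with finite variance is not a free consequence of the hypothesis. To get around this I would exploit that $\wf'$ is absolutely continuous with $\wf''=\wf-B$, so for any $T'\in[T-1,T]$
\[
  \wf'(T) = \wf'(T') + \int_{T'}^T\bigl(\wf(s)-B(s)\bigr)\,ds.
\]
Squaring, applying $(a+b)^2\le 2a^2+2b^2$ together with Cauchy-Schwarz on the integral (using $T-T'\le 1$), and then averaging the resulting inequality over $T'\in[T-1,T]$ produces the crucial pointwise estimate
\[
  \wf'(T)^2 \le 2\int_{T-1}^T\wf'(T')^2\,dT' + 2\int_{T-1}^T(\wf(s)-B(s))^2\,ds = 2\bigl(\EE_T(\wf,B) - \EE_{T-1}(\wf,B)\bigr).
\]
Under the hypothesis $T^{-1}\EE_T(\wf,B)\toalsur Z$ we also have $T^{-1}\EE_{T-1}(\wf,B)\to Z$ a.s., hence $T^{-1}(\EE_T(\wf,B)-\EE_{T-1}(\wf,B))\toalsur 0$, which gives $\wf'(T)^2/T\toalsur 0$. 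Combined with the trivial $\wf'(0)^2/T\to 0$ and the Pythagorean identity, this closes the a.s.\ reduction.
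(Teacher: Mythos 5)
Your proof is correct, and it takes a genuinely different route from the paper's. The paper does not use orthogonality: it controls $\left|\EE_T(f_T,B)-\EE_T(\wf,B)\right|$ via the $L_2$ triangle inequality by $\EE_T(f_T-\wf,0)+2\sqrt{\EE_T(\wf,B)\,\EE_T(f_T-\wf,0)}$, invokes the kernel estimates from the proof of Theorem \ref{t:EtoC} to get $\E \EE_T(f_T-\wf,0)=O(\ln T)$ and hence $\E\left|\EE_T(f_T,B)-\EE_T(\wf,B)\right|=O(\sqrt{T\ln T})$; summability of $T_n^{-1}\E\left|\cdot\right|$ along a geometric sequence $T_n=a^n$ then yields a.s.\ and $L_1$ convergence along $T_n$, and monotonicity of $T\mapsto\EE_T(f_T,B)$ together with $a\searrow 1$ upgrades this to all $T$. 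You replace all of this by three observations specific to the quadratic penalty: the exact Pythagorean identity (the cross term is precisely the vanishing first variation \eqref{vanishQ} at $f_T$ tested against $\eps=f_T-\wf$); the fact that $h=f_T-\wf$ solves the homogeneous equation $h''=h$ with Neumann data $-\wf\,'(0),-\wf\,'(T)$, whence the pathwise bound $\EE_T(h,0)\le C\left(\wf\,'(0)^2+\wf\,'(T)^2\right)$ for $T\ge 1$; and the self-improving estimate $\wf\,'(T)^2\le 2\left(\EE_T(\wf,B)-\EE_{T-1}(\wf,B)\right)$, which extracts the required a.s.\ control of the boundary term from the hypothesis itself. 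What you gain: a pathwise, $O(1)$-in-expectation bound on the energy gap (strictly sharper than the paper's $O(\ln T)$ bound \eqref{dif3}), no subsequence or monotonicity argument, and a transparent identification of the gap as a pure boundary effect determined by $\wf\,'(0)$ and $\wf\,'(T)$. What the paper's route buys: it recycles estimates already needed for Theorem \ref{t:EtoC} and rests only on a moment bound for $\EE_T(f_T-\wf,0)$ rather than on exact orthogonality and explicit solvability of the homogeneous ODE. Two points you should make explicit in a final write-up: that $\wf\in\W21[0,T]$ almost surely, so the first-order condition may legitimately be tested against $h$; and that $h'$ is absolutely continuous with $h''=h$ a.e., which is what forces $h(t)=ae^t+be^{-t}$.
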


\begin{proof}
We know from \eqref{dif1a} and  \eqref{dif3} that for any $T$
\be \label{dif4}
  \E  \left|   \EE_T (f_T,B) -  \EE_T (\wf,B) \right|
  \le  c A \sqrt{ T(1+ \ln T)}.
\ee
Let us fix any $a>1$ and consider geometric sequence of times
$T_n:= a^n$.
It follows from  \eqref{dif4} that
\[
  \E  \sum_{n=1}^\infty T_n^{-1}  \left|   \EE_{T_n} (f_{T_n},B) -  \EE_{T_n} (\wf,B) \right|
  < \infty.
\]
Therefore,
\[
   T_n^{-1}  \left|   \EE_{T_n} (f_{T_n},B) -  \EE_{T_n} (\wf,B) \right| \to 0
\]
almost surely and in $L_1$. Under assumptions of our proposition we obtain
\[
   T_n^{-1}  \EE_{T_n} (f_{T_n},B) \to Z
\]
almost surely (resp. in $L_1$).

Furthermore, since the least approximation energy $ \EE_T (f_T,B) $ is an increasing function
of $T$, we have for any $T\in [T_n,T_{n+1}]$
\begin{eqnarray*}
   && \left|  T^{-1}   \EE_{T} (f_{T},B)-Z  \right|
  \\
    &\le&
    \max\left\{  \left|  T^{-1}   \EE_{T_n} (f_{T_n},B)-Z  \right|,
                   \left|  T^{-1}   \EE_{T_{n+1}} (f_{T_{n+1}},B)-Z  \right|
     \right\}
   \\
   &\le&
    a \max\left\{  \left|  T_n^{-1}   \EE_{T_n} (f_{T_n},B)-Z  \right|,
                   \left|  T_{n+1}^{-1}   \EE_{T_{n+1}} (f_{T_{n+1}},B)-Z  \right|
     \right\} +(a-1)Z.
\end{eqnarray*}
Now the a.s. convergence  (resp. $L_1$-convergence) of $T^{-1} \EE_{T} (f_{T},B)$ to $Z$
follows from  that of   $T_n^{-1} \EE_{T_n} (f_{T_n},B)$ by letting $a \searrow 1$.
\end{proof}

\begin{prop}
If the process $B$ is Gaussian and its spectral measure $\mu_B$ has no atoms,
then $T^{-1} \EE_T(f_T,B) \to Z$ in $L_1$ and almost surely,  as $T\to\infty$.
\end{prop}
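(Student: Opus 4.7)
My plan is to apply Proposition~\ref{p:reduction} to reduce everything to the stationary approximation $\wf$. Specifically, it suffices to prove that $T^{-1}\EE_T(\wf,B)\to Z$ almost surely and in $L_1$; the proposition then transfers this convergence to $f_T$.

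First I would record the spectral structure obtained already in the proof of Theorem~\ref{t:EtoC}. The deviation $\wf(t)-B(t)=\int_{\R\setminus\{0\}}\frac{-u^2}{1+u^2}e^{itu}X(du)$ is a real centered stationary Gaussian process with spectral measure $\frac{u^4}{(1+u^2)^2}\mu_B(du)$, while $\wf\,'(t)=D_0+Y(t)$ with $Y(t):=\int_{\R\setminus\{0\}}\frac{iu}{1+u^2}e^{itu}X(du)$ being a real centered stationary Gaussian process with spectral measure $\frac{u^2}{(1+u^2)^2}\mu_B(du)$. In the Gaussian setting $D_0$ is uncorrelated with $X$ and therefore independent of $X$, hence independent of $Y$. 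Crucially, since both derived spectral measures are absolutely continuous with respect to $\mu_B$, the hypothesis that $\mu_B$ has no atoms carries over: the spectral measures of $\wf-B$ and of $Y$ also have no atoms.

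Next I would invoke the classical Maruyama--Grenander criterion stating that a real centered stationary Gaussian process is ergodic under time shifts iff its spectral measure has no atoms. Thus $\wf-B$ and $Y$ are ergodic. Continuous-time Birkhoff applied to the $L_1$ functionals $y\mapsto y^2$ and $y\mapsto y$ then yields, simultaneously a.s.\ and in $L_1$,
\[
T^{-1}\int_0^T(\wf(t)-B(t))^2\, dt \longrightarrow \int_{\R\setminus\{0\}}\frac{u^4}{(1+u^2)^2}\mu_B(du),
\]
\[
T^{-1}\int_0^T Y(t)^2\, dt \longrightarrow \int_{\R\setminus\{0\}}\frac{u^2}{(1+u^2)^2}\mu_B(du), \qquad T^{-1}\int_0^T Y(t)\, dt \longrightarrow 0.
\]

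Finally I would expand $\wf\,'(t)^2 = D_0^2 + 2D_0 Y(t) + Y(t)^2$ and integrate in $t$. The constant-in-$t$ piece contributes $D_0^2$; the $Y^2$ piece converges by the display above; and the cross term $2D_0\cdot T^{-1}\int_0^T Y(t)\,dt$ tends to $0$ almost surely (because $D_0$ is finite a.s.) and in $L_1$ by Cauchy--Schwarz, $\E\bigl|D_0\, T^{-1}\!\int_0^T Y\,dt\bigr|\le \sqrt{\E D_0^2}\,\sqrt{\E\bigl(T^{-1}\!\int_0^T Y\,dt\bigr)^2}$, whose second factor vanishes by the $L_2$ version of the same limit. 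Summing the kinetic and potential contributions gives $T^{-1}\EE_T(\wf,B)\to D_0^2+\int\frac{u^2}{1+u^2}\mu_B(du)=Z$, both a.s.\ and in $L_1$, and Proposition~\ref{p:reduction} completes the proof. The main obstacle is the ergodicity step: one must see that non-atomicity of $\mu_B$ transfers to the derived spectral measures of $\wf-B$ and $Y$, and one must isolate the drift $D_0$ before invoking the ergodic theorem, since $\wf\,'$ itself fails to be ergodic when $D_0\ne 0$.
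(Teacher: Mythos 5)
Your proposal is correct and follows essentially the same route as the paper: reduction via Proposition \ref{p:reduction}, identification of $\wf-B$ and $Y=\wf\,'-D_0$ as stationary Gaussian processes with spectral measures absolutely continuous with respect to $\mu_B$, the Maruyama--Grenander ergodicity criterion, and Birkhoff's theorem applied to the potential and kinetic parts. Your additional remarks on transferring non-atomicity to the derived spectral measures and on handling the cross term $2D_0\,T^{-1}\!\int_0^T Y\,dt$ merely make explicit details the paper leaves implicit.
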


\begin{proof}
According to Proposition \ref{p:reduction}, it is sufficient to prove that
\[
  T^{-1} \EE_T(\wf,B) \to Z,  \qquad \textrm{as } T\to\infty,
\]
in $L_1$ and almost surely. We may split the energy into potential and kinetic parts
and check that
\[
  T^{-1} \int_0^T |\wf(t)-B(t)|^2 dt \to \int_{\R\backslash \{0\}} \frac{u^4}{(1+u^2)^2}\ \mu_B(du)
\]
and
\[
  T^{-1} \int_0^T   |\wf\,'(t)|^2 dt \to |D_0|^2 +\int_{\R\backslash \{0\}} \frac{u^2}{(1+u^2)^2} \ \mu_B(du).
\]
Recall that by \eqref{wf_prime} we have a representation
\[
    \wf\,'(t) = D_0 + Y(t),
\]
where $Y$ is a centered stationary Gaussian process with the spectral measure
\[
    \nu(du)=\frac{u^2}{(1+u^2)^2}\, \mu_B(du).
\]
It follows that
\[
  T^{-1} \int_0^T   |\wf\,'(t)|^2 dt
  = |D_0|^2 +  2 T^{-1} D_0  \int_0^T   Y(t) dt +  T^{-1} \int_0^T   |Y(t)|^2 dt.
\]

Recall that Gaussian stationary processes whose spectral measure has no atoms are ergodic, see \cite{Gren}, \cite{Mar}.
Since both $B(t)-\wf(t)$ and $Y(t)$ belong to this class, we may use Birkhoff ergodic theorem
and obtain convergence of time-averages to the corresponding expectations in any $L_p$, $p\in (1,\infty)$,
and almost surely, i.e.\
\begin{eqnarray*}
      T^{-1} \int_0^T |\wf(t)-B(t)|^2 dt &\to& \int_{\R\backslash \{0\}} \frac{u^4}{(1+u^2)^2}\ \mu_B(du),
\\
      T^{-1} \int_0^T   Y(t) dt &\to& 0,
\\
      T^{-1} \int_0^T   |Y(t)|^2 dt &\to& \int_{\R\backslash \{0\}} \frac{u^2}{(1+u^2)^2} \ \mu_B(du),
\\
\end{eqnarray*}
and we are done.
\end{proof}
\bigskip

Another interesting class of examples where we can provide an affirmative
answer for the least energy convergence is given by L\'evy processes  (i.e.\
processes with independent stationary increments).

\begin{prop} 
Let $B(t), t\geq 0$, be L\'evy process
having finite second moments. Then  $T^{-1} \EE_T(f_T,B) \toL Z$, and
$T^{-1} \EE_T(f_T,B) \toalsur Z$,  as $T\to\infty$,
where
\[
   Z=\CC = |\E B(1)|^2 + \frac {\Var B(1)}{2}.
\]
\end{prop}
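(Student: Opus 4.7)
The plan is to apply Proposition \ref{p:reduction} and reduce the claim to $T^{-1}\EE_T(\wf,B)\to Z$ both almost surely and in $L_1$. Because $B$ is L\'evy, the drift $D_0=\E B(1)$ is deterministic, so $Z=\CC$; inserting $\Var B(s)=s\,\Var B(1)$ into the non-spectral representation of $\CC$ from the remark following Theorem \ref{t:EtoC} confirms $Z=|\E B(1)|^2+\Var(B(1))/2$. In order to have $\wf$ defined on all of $\R$, I would first extend $B$ to a two-sided L\'evy process $\tilde B$ by concatenating an independent copy on $(-\infty,0)$, preserving the stationary and independent increment structure.

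On the path space of $\tilde B$, the shift $\theta_h\colon \tilde B(\cdot)\mapsto \tilde B(\cdot+h)-\tilde B(h)$ is a measure-preserving flow. I would then verify that this flow is mixing, and hence ergodic: for cylinder events $A,A'$ depending on increments over bounded intervals, the sets $A$ and $\theta_h^{-1}A'$ depend on increments over disjoint intervals once $|h|$ exceeds the gap, hence are independent; a standard monotone class argument promotes this to asymptotic independence of all measurable events.

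Splitting $\EE_T(\wf,B)$ into potential and kinetic parts and using $\wf\,'(t)=D_0+Y(t)$ with $Y$ centered stationary as in \eqref{wf_prime}, the three processes $|\wf(t)-B(t)|^2$, $Y(t)^2$, and $Y(t)$ are measurable functionals of $\theta_t\tilde B$ (through the kernels in \eqref{fb} and \eqref{fprime_b}), with means $\int \frac{u^4}{(1+u^2)^2}\mu_B(du)$, $\int \frac{u^2}{(1+u^2)^2}\mu_B(du)$, and $0$, respectively. All three integrals are finite because both integrands are dominated by $\frac{u^2}{1+u^2}$ and, by the identification of $Z$ with the spectral formula, $\int \frac{u^2}{1+u^2}\mu_B(du)=\Var(B(1))/2<\infty$. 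Birkhoff's ergodic theorem then yields almost sure convergence of each time average to its mean; for the two non-negative averages, almost sure convergence together with constancy of the mean delivers $L_1$ convergence by Scheff\'e's lemma, while for the signed cross-term $T^{-1}\int_0^T Y(t)\,dt$, von Neumann's mean ergodic theorem in $L_2$ provides $L_2$ (hence $L_1$) convergence to $0$. Summing the three limits reproduces $Z$.

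The main obstacle I anticipate is the ergodic-theoretic bookkeeping: expressing $B-\wf$ and $Y$ as honest measurable shift-functionals of $\tilde B$ via the convolution kernels, and carrying the monotone class extension of mixing from cylinder events to the full product $\sigma$-algebra on path space. The underlying independence of L\'evy increments over disjoint intervals is classical, but these measure-theoretic details are where care is needed.
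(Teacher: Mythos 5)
Your proposal is correct, and it follows the paper's overall architecture (two-sided extension of $B$, identification of $Z=\CC$ with $|\E B(1)|^2+\Var B(1)/2$, reduction via Proposition \ref{p:reduction} to the convergence of $T^{-1}\EE_T(\wf,B)$), but it handles the ergodic-theoretic core differently. The paper writes $\wf-B$ and $\wf\,'$ explicitly as stochastic integrals $\frac12\int_\R e^{-|t-u|}\sgn(t-u)\,X(du)$ and $\frac12\int_\R e^{-|t-u|}\,X(du)$ against the L\'evy noise $X((t_1,t_2]):=B(t_2)-B(t_1)$, recognizes them as Ornstein--Uhlenbeck processes driven by L\'evy noise, and cites Rosinski--Zak for their ergodicity before invoking the ergodic theorem. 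You instead establish mixing of the increment shift flow $\theta_h$ on the path space of the two-sided process directly from independence of increments over disjoint intervals, and then treat $|\wf-B|^2$, $Y^2$ and $Y$ as integrable factors of this single ergodic flow. Your route is more self-contained (no external ergodicity citation) and automatically covers every functional of the increments at once; the paper's route is shorter on the page and gives the concrete OU representation of the limiting stationary processes. Your handling of the $L_1$ statement (Scheff\'e for the nonnegative averages with constant means, von Neumann in $L_2$ for the signed cross term) is a clean substitute for the paper's appeal to the $L_1$ ergodic theorem for the squares, and your identification of $Z$ via the non-spectral formula $\CC=(\E D_0)^2+\frac12\int_0^\infty\Var(B(s))e^{-s}\,ds$ with $\Var B(s)=s\Var B(1)$ is an equally valid alternative to the paper's observation that $\mu_B$ is $\Var B(1)$ times the Wiener spectral measure. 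The only point deserving extra care, which you already flag, is that ``monotone class'' should really be an approximation-by-cylinder-events argument when promoting asymptotic independence to the full $\sigma$-algebra, since mixing is an asymptotic property; this is standard and does not affect the validity of the proof.
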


\begin{proof}
Without loss of generality, we may extend $B$ to the negative half-axis in such a way that $-B(-t), t\leq 0$,
is an independent equidistributed copy of $B(t), t\geq 0$.
Looking at $B$ as a process with stationary increments in the wide sense, we see that its linear part
$D_0 t$ is deterministic, and $\E B(t)= D_0 t$, while the spectral measure $\mu_B$
is the same as that of Wiener process up to the numerical factor $\Var B(1)$.
It follows that the hypothetic energy limit $Z$ indeed has the form given in
proposition.

Given the two-sided process $B(t), t\in\R$, we may define an associated L\'evy  noise
(an independently scattered homogeneous random measure) $X(du)$ on $\R$ by
\begin{eqnarray*}
X((t_1,t_2])  :=B(t_2)-B(t_1),  \quad t_1 \leq t_2.
\end{eqnarray*}
Elementary calculations show that
\begin{eqnarray*}
 \wf(t) - B(t) &=&  \frac 12 \int_\R e^{-|\tau|} B(t+\tau) d\tau - B(t)
\\
&=&    \frac 12 \int_\R e^{-|t-u|} \sgn(t-u)  X(du).
\end{eqnarray*}
and
\begin{eqnarray*}
 \wf\,'(t) &=&  \frac 12 \int_\R e^{-|\tau|} \sgn(\tau) B(t+\tau) d\tau
\\
&=&    \frac 12 \int_\R e^{-|t-u|}  X(du).
\end{eqnarray*}
Both processes belong to the class of Ornstein--Uhlenbeck processes driven by L\'evy noise.
As such, they are ergodic, cf.\ \cite [Theorem 5]{RZ1}, \cite{RZ2}, and their squares satisfy
almost sure and $L_1$ ergodic theorems. By applying Proposition \ref{p:reduction}
we complete the proof.
\end{proof}

\begin{rem} {\rm
There exist strong invariance principles providing certain closeness between
sample paths of a Wiener process and a L\'evy process. However, they don't
seem to be good enough for the transfer of least energy approximation estimates.
}
\end{rem}

We conclude this series of examples by mentioning an interesting open problem. Let
$B(t),t\ge 0$, be an $\alpha$-stable L\'evy process with $0<\alpha<2$. Then the sample paths of $B$
are locally bounded, thus the least energy approximation problem is perfectly meaningful.
However, since $B(t)$ has infinite second moment for any $t>0$, the results of the
present paper do not apply. We conjecture that in this case the least approximation
energy would not grow in a quasi-deterministic linear way, but rather imitate a stable
subordinator of order $\alpha/2$ with scaling $T^{2/\alpha}$ because every jump of $B$
of large size $r$ would be reflected by a fast accumulation of energy for approximation function of size $r^2$ during
a finite time interval. The handling of such different behavior would be obviously beyond
the size of the present contribution.

\section{Wiener process: alternative approach}
\label{s:Wharm}

In view of the importance of the Wiener process $W:=W^{(1/2)}$ we find it
reasonable to trace an alternative approach to its least energy approximation. We prove that
\begin{equation} \label{CW_again}
   \E \EE_T(f_T,W) \sim \frac T2\, , \quad \textrm{as } T\to \infty,
\end{equation}
as obtained before in \eqref{CW}.
By the scaling property of the Wiener process, the optimization problem
\[
\EE_T(f,W) = \int_0^T \left( |f(t)-W(t)|^2 +f'(t)^2\right) dt \searrow \min, \; f\in \W21[0,T],
\]
is equivalent to the problem
\be \label{prob01}
   \EE_T^*(f,W)  := \int_0^1 \left( T^2 |f(t)-W(t)|^2 +f'(t)^2\right) dt
   \searrow \min, \; f\in \W21[0,1].
\ee
Let $(e_j)_{j\ge 1}$ be the eigenbase of the covariance operator
of $W$ in $L_2[0,1]$ and let $(\gamma_j)_{j\ge 1}$ be the corresponding
eigenvalues. It is well known that
\[
   \gamma_j = \frac{1}{\pi^2(j-1/2)^2}\, , \qquad e_j(t)=\sqrt{2}\, \sin((j-\tfrac 12)\pi t)
   , \qquad j=1,2,\dots.
\]
Denoting by $w_1, w_2,\ldots$ independent Gaussian random variables with $\E w_j=0$ and $\Var w_j = \gamma_j$, we have the Karhunen--Lo\`{e}ve expansion of the Wiener process
$$
W(t) = \sum_{j=1}^{\infty} w_j e_j(t),\quad t\in [0,1].
$$
For any absolutely continuous function $f$  having square integrable derivative
we may write expansions
\[
   f- f(0):=\sum_{j=1}^\infty f_j \, e_j
\]
and
\[
  f' = \sum_{j=1}^\infty   f_j \, e_j'
     = \sum_{j=1}^\infty  \gamma_j^{-1/2} f_j \,  \gamma_j^{1/2}  e'_j.
\]
Since the system of functions $( \gamma_j^{1/2}  e'_j)_{j\ge 1}$ is orthonormal,
we have
\[
   \int_0^1 f'(t)^2 dt =  \sum_{j=1}^\infty \gamma_j^{-1}  f_j^2\, .
\]
It follows that the problem \eqref{prob01} takes a coordinate form
\begin{multline}
T^2 \int_0^1 \left| f(0) + \sum_{j=1}^\infty (f_j-w_j)e_j\right|^2 dt + \sum_{j=1}^{\infty}\gamma_j^{-1}  f_j^2
\\
= T^2 \left( f(0)^2+2 f(0) S +  \sum_{j=1}^\infty  |f_j-w_j|^2\right)
     + \sum_{j=1}^\infty \gamma_j^{-1}  f_j^2
    \searrow    \min,
\end{multline}
where
\begin{equation}\label{eq:def_S}
S := \sum_{j=1}^\infty (f_j-w_j) \sqrt{2\gamma_j}.
\end{equation}

We first optimize over initial value $f(0)$ with other coordinates $f_j$ fixed and find
$f(0)=-S$. Now, the optimization problem becomes
\be \label{w_energy}
    \EE_T^*(f,W)=
    -T^2 S^2
    +\sum_{j=1}^{\infty}  \left( \left(T^2+ \frac{1}{\gamma_j}\right) f_j^2 -2 T^2 f_j w_j  +T^2 w_j^2 \right)
    \searrow    \min.
\ee
Since partial derivatives in each $f_j$ must vanish, we find
\be \label{fj}
  f_j= T^2 \ \frac{S\sqrt{2\gamma_j}+w_j}{T^2+\frac{1}{\gamma_j}}, \quad j=1,2,\ldots.
\ee
It follows that
\[
    \left( T^2+ \frac{1}{\gamma_j} \right) f_j^2
     =
     \frac{T^4( 2S^2 \gamma_j + 2\sqrt{2\gamma_j} S w_j  +w_j^2)}{T^2+\frac{1}{\gamma_j}} \ ,
\]
\[
   2 T^2 f_j w_j
   =
    \frac{ 2 T^4 \sqrt{2\gamma_j}S w_j + 2 T^4 w_j^2}{T^2+\frac{1}{\gamma_j}} \ .
\]
We plug these expressions into \eqref{w_energy}, notice that the terms containing $S w_j$
cancel and obtain the least approximation energy
\begin{eqnarray*}
 \EE_T(f_T,W)
  &=&
 -T^2 S^2 +\sum_{j=1}^{\infty}    \frac{2T^4\gamma_j} {T^2+\frac{1}{\gamma_j}}\  S^2
 +  \sum_{j=1}^{\infty}  \left( T^2 -   \frac{T^4}
{T^2+\frac{1}{\gamma_j}}    \right)  w_j^2 \, .
\end{eqnarray*}
By using the identity
\be \label{sumgam}
    \sum_{j=1}^{\infty} (2\gamma_j) = 1
\ee
we may rewrite the least energy as
\begin{eqnarray*}
   \EE_T(f_T,W) &=&
   - \sum_{j=1}^{\infty}  \frac{2T^2} {T^2+\frac{1}{\gamma_j}} \  S^2
   +\sum_{j=1}^{\infty}      \frac{T^2 \, \frac {1}{\gamma_j}} {T^2+\frac{1}{\gamma_j}}\  w_j^2 \, .
 \end{eqnarray*}
Recall that $\E w_j^2= \gamma_j$. We find the average least energy
\[
 \E \EE_T(f_T,W) =  \left( -2 \E [S^2]+ 1    \right) \sum_{j=1}^{\infty}  \frac{1} {1+\frac{1}{T^2\gamma_j}} \ .
\]
Since
\be\label{sumint}
   \sum_{j=1}^{\infty}  \frac{1} {1+\frac{1}{T^2\gamma_j}} \sim \frac{T}{\pi}\ \int_0^\infty \frac{dx}{1+x^2}
   = \frac{T}{2}, \quad \text{as } T\to\infty,
\ee
and, as we will see,
\be \label{S2}
    \E S^2 \sim  \frac{1}{T}, \quad \text{as } T\to\infty,
\ee
we arrive at~\eqref{CW_again}.

It remains to analyze the behavior of $S$. By using the definition of $S$ in~\eqref{eq:def_S} and formulae
\eqref{fj} we get an equation
\[
   S = \sum_{j=1}^\infty \sqrt{2\gamma_j}
       \left(  T^2 \ \frac{S\sqrt{2\gamma_j}+w_j}{T^2+\frac{1}{\gamma_j}} - w_j  \right).
\]
Solving it in $S$ we get
\[
   S=\frac{A(T)}{D(T)}
\]
with
\begin{eqnarray*}
  A(T)&=& \sum_{j=1}^\infty \frac{w_j \sqrt{2\gamma_j} }{T^2\gamma_j+1}\ ,
  \\
   D(T)&=& \left(\sum_{j=1}^\infty \frac{2\gamma_j}{1+\frac{1}{T^2\gamma_j}} \right)-1.
\end{eqnarray*}
Notice that $A(T)$ is a centered normal random variable with variance
\[
    \E |A(T)|^2 =  \sum_{j=1}^\infty \frac{ 2\gamma_j^2} {(T^2\gamma_j+1)^2}
    \sim \frac{2}{\pi} \int_0^\infty \frac{dx}{(1+x^2)^2}\ T^{-3} \sim \frac 1 {T^3}
\]
and for the non-random denominator $D(T)$ by using \eqref{sumgam} and \eqref{sumint}
we have
\[
  D(T) = - \frac{2}{T^2} \sum_{j=1}^\infty \frac{1}{1+\frac{1}{T^2\gamma_j}}
  \sim  - \frac{1}{T} \,  .
\]
Now \eqref{S2} is confirmed and we are done.

\section{Addendum: Proof of Proposition \ref{p:det}}

\begin{proof}[ of Proposition \ref{p:det}]
We first notice that
\[
   \inf_{f\in \W21[0,T]} \EE_T(f)\le  \EE_T(0)= \int_0^T Q(-B(t))\, dt < \infty.
\]
Indeed, if $B(\cdot)$ is bounded, then $Q(-B(\cdot))$ is bounded.
If \eqref{Qprime} holds, then we have
\[
  Q(x)\le \widetilde{A}\, (|x|^{p+1}+1), \qquad x\in \R,
\]
with  $\widetilde{A} := 2A+Q(0)$. Hence, if $B\in L_{p+1}[0,T]$, then
\[
  \int_0^T Q(-B(t))\, dt \le \widetilde{A} \int_0^T (|B(t)|^{p+1}+1) \, dt <\infty.
\]

Second, we show that we may restrict the minimization in \eqref{LQmin} on
a subclass
\be \label{inWW}
   \WW_{C,M} :=\left\{ f\in \W21[0,T]:\, \int_0^T f'(t)^2dt
   \le C, \max_{0\le t\le T} |f(t)| \le M\right\}
\ee
with sufficiently large parameters $C$ and $M$.
To justify this statement, it is sufficient to show that for each $r>0$ there
exist $C,M$ such that
\[
  \left\{ f: \EE_T(f) \le r   \right\} \subset \WW_{C,M}.
\]
Let $f$ be such that $\EE_T(f) \le r$. Then
\[
   \int_0^T f'(t)^2 dt \le \EE_T(f) \le r
\]
and we may let $C:=r$. Moreover, for any $0\le s,t\le T$ we have
$|f(s)-f(t)|\le \sqrt{rT}$ by using H\"older inequality. Therefore,
if $|f(s)|\ge M$ holds for some $s\in[0,T]$, then $|f(t)|\ge M- \sqrt{rT}$ for
all $t\in[0,T]$. We see that
\begin{eqnarray*}
r &\ge& \EE_T (f)\ge \int_{\{t: |B(t)|\le (M-\sqrt{rT})/2\}} Q(f(t)-B(t))\, dt
\\
&\ge& \mes\{t: |B(t)|\le (M-\sqrt{rT})/2\}\ \inf\{Q(x): |x|\ge (M-\sqrt{rT})/2\}.
\end{eqnarray*}
When $M$ goes to infinity, then the first term tends to $T$, while the second
tends to infinity due to the assumption $\lim_{x\to\pm \infty}Q(x)=+\infty$.
Therefore for large $M$ we obtain a contradiction. Hence, for such $M$
assumption $|f(s)|\ge M$ cannot hold, and \eqref{inWW} is confirmed.

Next, we show that the minimum of the problem \eqref{LQmin} is attained.
Since the functional $\EE_T(\cdot)$ is lower semi-continuous with respect
to the uniform convergence (notice that the potential part of the energy is even continuous),
and since $\WW_{C,M}$ is relatively compact
with respect to the topology of uniform convergence, the minimum of $\EE_T(\cdot)$
on  $\WW_{C,M}$  is indeed attained on some set of minimizers.

Next, since $Q(\cdot)$ is strictly convex, the functional
$\EE_T(\cdot)$ is also strictly convex, hence the minimizer is unique. Let us denote it $f_T$.

By Lebesgue theorem, G\^{a}teaux (directional) derivative of $\EE_T(\cdot)$
\begin{eqnarray*}
    \delta \EE_T(f_T,\eps)
    &:=& \lim_{h\to 0} h^{-1} \left( \EE_T(f_T+h \eps) - \EE_T(f_T) \right)
\\
    &=&   \int_0^T \left[ 2f_T'(t)\eps'(t) + Q'(f_T(t)- B(t))\eps(t) \right] dt
\end{eqnarray*}
is well defined and must vanish at $f_T$ for any $\eps\in \W21[0,T]$. We have thus
\be \label{vanishQ}
   \int_0^T \left[ 2f_T'(t)\eps'(t) + Q'(f_T(t)- B(t))\eps(t) \right] dt = 0.
\ee

\begin{footnotesize}
To justify the application of Lebesgue dominated convergence theorem to the integrals
\[
        \int_0^T   h^{-1}  \left[  Q(f_T(t) +h\eps(t) - B(t))- Q(f_T(t)- B(t))  \right] dt,
\]
notice that both functions $f_T$ and $\eps_T$ are bounded, say,
$\max|f_T(t)|\le M_1$, $\max|\eps(t)|\le M_2$. Therefore, if $B(\cdot)$ is bounded,
say,  $\max|B(t)|\le M_3$,
then for $h\le 1$ the integrand is uniformly bounded by a constant
$M_2 \sup_{|x|\le M_1+M_2+M_3} |Q'(x)|$
and we use the fact that the derivative of a convex function (if it exists everywhere)
is locally bounded.

Alternatively, if \eqref{Qprime} holds, we have
\begin{eqnarray*}
 && h^{-1}  \left|  Q(f_T(t) +h\eps(t) - B(t))- Q(f_T(t)- B(t))  \right|
 \\
 &=&    |Q'(f_T(t) +\theta h\eps(t) - B(t))| \ |\eps(t)|
 \\
 &\le&  A( ( M_1+M_2 + |B(t)| )^p+1) M_2 ,
\end{eqnarray*}
which also provides an integrable majorant due to $B\in L_p[0,T]$.
\end{footnotesize}
\medskip

Let us fix some $\tau\in (0,T]$ and apply \eqref{vanishQ} to the functions
\[
   \eps_{u}(t):=
   \begin{cases}
   1,              & 0\le t\le \tau-u;\\
   0,              & t\ge \tau;\\
   u^{-1}(\tau-t), & \tau-u\le t\le \tau,
   \end{cases}
\]
with $0<u<\tau$. We obtain
\begin{eqnarray*}
   0 &=& \int_0^T \left[ 2f_T'(t)\eps_u'(t) + Q'(f_T(t)- B(t))\eps_u(t) \right] dt
   \\
    &=& - 2u^{-1} \int_{\tau-u}^{\tau} f_T'(t) dt
   \\
     &&  + \int_0^{\tau-u} Q'(f_T(t)- B(t)) \, dt
     + \int_{\tau-u}^{\tau} Q'(f_T(t)- B(t))\eps_u(t) \, dt
   \\
    &=& - 2u^{-1} (f_T(\tau)-f_T(\tau-u))
   \\
     &&  + \int_0^{\tau-u} Q'(f_T(t)- B(t)) \, dt
      + \int_{\tau-u}^{\tau} Q'(f_T(t)- B(t))\eps_u(t) \, dt.
\end{eqnarray*}
Now we let $u\searrow 0$ and see that the left derivative $(f_T)_-'(\tau)$
exists and
\be \label{leftder}
   2 \, (f_T)_-'(\tau) =  \int_0^{\tau} Q'(f_T(t)- B(t)) \, dt.
\ee
In exactly the same way one obtains
\be \label{rightder}
   2 \, (f_T)_+'(\tau) = - \int_\tau^{T} Q'(f_T(t)- B(t)) \, dt,
\ee
Since $f_T'(\cdot)$ exists almost everywhere, for some $\tau\in (0,T)$
we have $(f_T)_-'(\tau)=(f_T)_+'(\tau)$, i.e.\
\be \label{int0}
  \int_0^{T} Q'(f_T(t)- B(t)) \, dt = 0.
\ee
This fact in turn proves that $(f_T)_-'(\tau)=(f_T)_+'(\tau)$ for every
$\tau\in (0,T)$, i.e.\ the function $f_T(\cdot)$ is differentiable
everywhere and we have
\[
   2\, f_T'(\tau) = - \int_0^{\tau} Q'(f_T(t)- B(t)) \, dt,  \qquad \tau\in (0,T).
\]
By \eqref{int0}, the  boundary conditions $(f_T)_+'(0)=(f_T)_-'(T)=0$ also follow
from representations \eqref{leftder} and \eqref{rightder}.
\end{proof}

\section{Concluding remark}

Of course, one would like to handle the case of more or less general
penalty functions $Q$. But in this case the equation replacing \eqref{de}
is not linear and we don't know whether we may proceed with some, may be
inexplicit, analogues of exponential functions. It would be also nice to
guess a stationary approximation for least energy functions related to
general penalty $Q$.

\section*{Acknowledgments}

The work of the second named author was supported by grants NSh.2504.2014.1, RFBR 13-01-00172, and SPbSU 6.38.672.2013.

\section{Compliance with ethical standards}

The authors declare that they have no conflict of interest.


\end{document}